\begin{document}

\title{Galerkin Finite Element Method for Nonlinear  Fractional {Differential} Equations}
%\subtitle{Do you have a subtitle?\\ If so, write it here}

%\titlerunning{Short form of title}        % if too long for running head

\author{Khadijeh Nedaiasl         \and
        Raziyeh Dehbozorgi %etc.
}

%\authorrunning{Short form of author list} % if too long for running head

\institute{K. Nedaiasl \at
             Institute for Advanced Studies in Basic Sciences,
            Zanjan, Iran, \\
            \email{ \texttt{nedaiasl@iasbs.ac.ir  \&   knedaiasl85@gmail.com}}          %  \\
%             \emph{Present address:} of F. Author  %  if needed
           \and
           R.  Dehbozorgi \at
          School of Mathematics, Iran University of Science and Technology, Tehran, Iran,\\
          \email{ \texttt{r.dehbozorgi2012@gmail.com}}
}

\date{Received: date / Accepted: date}
% The correct dates will be entered by the editor

\maketitle

\begin{abstract}
In this paper,  we study  the existence, regularity, and   approximation of the solution for a class of nonlinear fractional differential equations. {In order to do this}, suitable variational formulations are defined for a nonlinear  boundary value problems with Riemann-Liouville and Caputo fractional derivatives together with  the homogeneous Dirichlet condition.   We {investigate} the well-posedness  and also the regularity of the corresponding  weak solutions. Then, we develop a Galerkin finite element approach {for} the numerical approximation of the weak formulations and   { drive a priori error estimates and prove the stability of the schemes}. Finally, some numerical experiments are provided to {demonstrate} the accuracy of the proposed method. 
\keywords{fractional differential operators\and Caputo derivative, Riemann-Liouville derivative\and variational formulation\and nonlinear operators\and   Galerkin method.}
% \PACS{PACS code1 \and PACS code2 \and more}
\subclass{ 26A33\and 65R20\and  65J15\and  65L60.}
\end{abstract}

\section{Introduction}\label{intro}
In the current study, we consider {a} fractional order nonlinear boundary value problem as follows:
Find $u$ such that
\begin{equation}\label{asli}
-_{0}^{\circ}D^{s}_x u(x) +g(x,u(x)) =f(x),\quad x\in \Omega:=[0,1],
\end{equation}
\[u(0)=0,\quad u(1)=0,\]
where  $s \in (1,2)$,  and $_{0}^{\circ}D^{s}_x $ refers to either Riemann-Liouville or Caputo fractional derivatives which are detailed below. Furthermore,  $f$ and $g$ are known functions chosen from the suitable function spaces defined in Section \ref{pre}. 

Developing the order of differentiation to any real number is an interesting  question. To find an affirmative answer, some efforts have been done and different types of so-called fractional derivatives have been introduced \cite{kilbas2006theory}. It is notified  that the fractional derivative is a  concept with  attractive  applications in science and engineering. It {appears} in the anisotropic diffusion modeling anomalously for cardiac tissue in microscopic and macroscopic levels. Furthermore, fractional derivative models are certain instances of nonlocal models which are introduced in comparison with the classical ones \cite{dui}. 
%--------------------------------------------------

Similar to the ordinary and partial differential equations, {one follows} two approaches in seeking solutions for Fractional Differential Equations (FDEs); analytic and numeric solutions. The analytical methods such as {the Fourier, Laplace and Mellin transform methods} and even Green function approach (fundamental solution) are available for some special types of FDEs \cite{kilbas2006theory, podlubny1998fractional}.  In practice, we have to { apply} numerical methods due to the lack of applicability of analytical methods for a wide range of FDEs. Hence, the study of the numerical approaches for them is of great  importance. 
%-----------------------------------------------

It is worthy to mention that in spite of different definitions for fractional differential operators, most of them are defined by the Abel's integral operator. Among the popular numerical approaches { to} Abel's integral equation, one could mention { the} collocation method \cite{brunner2017volterra} and { the} Galerkin method based on piecewise polynomials \cite{eggermont88, Vogeli2018} where the different varieties of those approaches, in general perspective,  spectral and projection methods, could be utilized to find approximation for FDEs \cite{li2015numerical, podlubny1998fractional}. Converting { FDEs} to a suitable integral equation, and then solving { it} numerically with the above mentioned approaches \cite{wang2018spectral} or directly solving { it} by finite difference method \cite{LI2016614}  are based on the adequate regularity assumptions of the strong solution which is not available in general \cite{ervin2018, jin2015}. {   Here, we contribute { to} the study on the numerical solution of one dimensional nonlinear FDEs which involve Riemann-Liouville or Caputo derivative by introducing  { an appropriate weak formulation} and describing the Galerkin solution with convergence analysis in some appropriate functional spaces.}
%-------------------------------------------------------------

The fractional operator in (\ref{asli}) is non-local and $g$ is  a nonlinear function  with respect to $u$, so the study of the existence, uniqueness, regularity of solution,  and the  numerical investigation are challenging.	The existence of classical solution for the nonlinear FDEs is considered in \cite{ZHANG2000804} and for { the linear operator in one dimension}, the regularity of the solution is investigated in \cite{ervin2018}. In this work, we explore the { issues} of the existence and uniqueness with the aid of Browder-Minty method of the monotone operator { theory}. 
%------------------------------------------------------------------

In the recent literature, due to their applications in science and engineering \cite{li2015numerical}, several types of numerical methods have been proposed for the approximation of FDEs.  The theory and { the} numerical solution of a linear Riemann-Liouville and Caputo FDEs with two-point boundary condition  have been extensively studied  in \cite{Kopteva2017, Liang2018}. In those works, the fractional boundary value problem is reformulated appropriately in terms of { the} Volterra integral equation, and then  the numerical approach is proceed by some suitable schemes such as the piecewise polynomial collocation and { the} spectral Galerkin methods. Spectral and pseudo-spectral  methods are some of the interesting numerical approaches which are taken into consideration for the FDEs. Among the whole research on this area, we can mention \cite{Li2017, Zaky2019, Yang2015, yarmohammadi} wherein the Jacobi polynomials play a crucial role in the construction of the approximation. The attention to this class of orthogonal polynomials is a motivation for introducing a generalization of them with application in the numerical solution of FDEs \cite{chen2016}. 
%----------------------------------------------------------

In this work, we study the Galerkin finite element method for Riemann-Liouville and Caputo nonlinear fractional boundary value problems of Dirichlet type.  The finite element method is a popular numerical approach in order to find an approximation for  nonlinear differential equations  \cite{HLAVACEK1994168} and  \cite{wriggers2008nonlinear}.  The finite element solution of quasi-linear  elliptic problems with non-monotone  operators have been considered in \cite{abdulle2012priori} and \cite{feistauer1993analysis}. Furthermore, in \cite{Feistauer1986} under the assumptions of the strong monotonicity and Lipschitz continuity of the corresponding second order  nonlinear elliptic  operator, a linear order approximation by finite element method has been obtained. In this paper, the investigated fractional nonlinear  operators have the monotonicity and Lipschitz continuity properties, which are crucial for our analysis. 
%------------------------------------------------------

The reasonable energy space associated with the non-local operators  is fractional Sobolev space. Also, due to the presence of the nonlinear term in Eq.  (\ref{asli}), we utilize Musielak-Orlicz space in order to  introduce a suitable functional space by intersection of the two mentioned spaces in a convenient way.  Then, with the aid of the monotone operator theory, the coercivity of the nonlinear variational formulation along with the Riemann-Liouville and Caputo fractional derivatives is investigated. This approach leads to { getting} a unique weak solution which { can} be approximated by { the} finite element method.
{ 
	Two main features of this research are as follow:
	\begin{itemize}
		\item[$\bullet$]
		We study the existence and uniqueness issue of the weak solution for the main problem (\ref{asli}) utilizing the monotone operator theory in some  Musielak-Orlicz and fractional Sobolev spaces. 
		\item[$\bullet$]
		We develop the Galerkin finite element method for nonlinear FDEs and obtain a priori error estimates for the method based on the generalization of the C\'{e}a's lemma. 
	\end{itemize}}
%---------------------------------------------

We organize the reminder of the paper as follows: in Section \ref{pre}, some introduction regarding to the fractional calculus, { the} semi-linear monotone operator and also { the} suitable { functional} spaces { is} briefly presented.   Section \ref{var} is devoted to the variational formulation of nonlinear boundary value problems along with Riemann-Liouville and Caputo derivatives. Furthermore, the regularity of the solution is studied in this section. The numerical approximation of the weak solution is examined by finite element approximation in Section \ref{fem} along with the full study { of} the existence and uniqueness issue of the discrete equations and the { convergence and stability} of the method.  In numerical experiments section, some FDEs are  solved by { the} finite element method. { Finally}, we provide some conclusion{ s} and further remarks for the future works. 
%------------------------------------------------

\section{Preliminaries}\label{pre}
This section is devoted to  some preliminaries to fractional calculus { to provide an} introduction { to} the  { problem considered} in the paper.  The energy space regarding fractional operators, {fractional Sobolev spaces are introduced in this section. Then, in order to { ensure} the existence of the weak solution by monotonicity arguments, some preface to nonlinear functions on { Musielak-Orlicz} spaces { is} provided. 
%------------------------------------------------

\subsection{Fractional calculus}
	{ To make this paper self-contained}, we recall the  Riemann-Liouville and { the} Caputo fractional integral and derivatives from \cite{kilbas2006theory}. For any $s >0$ with $n-1<s<n$, $n \in \mathbb{N}$, the right and left sided fractional integrals on the bounded interval $[a,b]$ are as follows:
	
	\noindent left fractional integral operator is defined as
	\begin{equation}\label{eq1}
	(_{a}I^{s}_{x}u)(x)= \dfrac{1}{\Gamma(s)}\int_a^x(x-y)^{s-1} u(y)\mathrm{d}y,
	\end{equation} 
	while the right fractional integral operator is given by
	\begin{equation}\label{eq2}
	(_{x}I^{s}_{b}u)(x)= \dfrac{1}{\Gamma(s)}\int_x^b(y-x)^{s-1} u(y)\mathrm{d}y.
	\end{equation}
	Left-sided Riemann-Liouville fractional derivative of the order $ s $ for { the} function $u \in H^n(\Omega)$ can be defined as
	\begin{equation}
	^{R}_{a}D_x^s u=D^{n}\, _{0}I_{x}^{n -s}u,
	\end{equation}
	where the operator $D^{n}$ denotes the classical derivative of the order $n$. The corresponding right-sided Riemann-Liouville fractional derivative is stated as  
	\begin{equation}
	^{R}_{x}D_b^s u=(-1)^n D^{n} \,_{x}I_{b}^{n-s}u.
	\end{equation}
	In addition, the left-sided Caputo derivative of the order $s$ is given by 
	\begin{equation}
	^{C}_{0}D_x^s u= \,_{0}I_{x}^{s}\, D^n u,
	\end{equation}
	where the following relation defines the right-sided Caputo derivative
	\begin{equation}
	^{C}_{x}D_b^s u={(-1)^n}_{x}I_{b}^{s}D^n u.
	\end{equation}
	From the above definitions, it is apparent that  the Abel's integral operator has a significant role in { defining} fractional derivatives.
%------------------------------------------------------------
	\subsection{Some properties { of} semi-linear operators}
	{ Let $V$ be a real Banach space and $V^{*}$ as its dual space}. {  Let define the duality pairing as the functional 
		\[
		V^*\times V \ni (y,x) \rightarrow \langle y,x \rangle =y(x),
		\]
		which means that $\langle y,x \rangle$ is the value of a continuous linear functional $y \in V^*$ on an element $x \in V$ and $\Vert . \Vert$  and $\Vert . \Vert_{*}$ are the norms associated with $V$ and $V^*$, respectively.} 
	{  Due to the central role of the monotonicity property, we present a formal definition for this concept.} 
	
	\begin{definition}\label{def2}(\cite{askhabov2011nonlinear, krasno}) Let $ V$ be a separable Banach space.
		An operator $ F$ is called monotone on $ V $ if 
		\[\langle Fx-Fy,x-y\rangle \geq 0, \quad \forall x,y\in V,\]
		it is called strictly monotone if
		\[\langle Fx-Fy,x-y\rangle > 0, \quad \forall x,y\in V, \quad x\neq y,\] 
		it is called coercive 
		\[\langle Fx,x \rangle > \gamma (\Vert x \Vert). \Vert x \Vert, \quad \text{where} \quad \gamma(s)\rightarrow \infty  \quad \text{as}\quad s\rightarrow \infty, \] 	
		it is called hemi-continuous if the real-valued function
		\[ s\rightarrow \langle  F(u+s.v), w\rangle, \]	
		is continuous on $[0,1]$ { for  every fixed $u,v$ and $w \in V$. Furthermore,} in the terminology of the article \cite{kato}, the operator $ F:V\rightarrow V^* $  with the domain {  $ D=D(F)\subset V$} is hemi-continuous if for $ u\in D$ { and} $w\in V $, we get $ F(u+t_nw)\rightarrow F(u), $ when the sequence $ t_n $ tends to zero. 
	\end{definition}
	In the following, we state the Browder-Minty theorem which is utilized to prove the existence and uniqueness of the weak solution. 
	%------------------------------------------
	\begin{theorem}\label{thmm}
		\textbf{(Browder-Minty)}
		Let $ V $ be a real reflexive Banach space and a hemi-continuous monotone operator 
		$ F:V\rightarrow V^* $ be coercive. Then, for any $ g\in V^* $, there exists a solution $ u^* \in V $
		of the equation 
		\[F(u)=g. \]
		This solution is unique if $F$ is a strictly monotone operator. 
	\end{theorem}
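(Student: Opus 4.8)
The plan is to prove existence by the Galerkin method combined with Minty's monotonicity trick, and then obtain uniqueness directly from strict monotonicity. As a first reduction I would replace $F$ by $\tilde F(u)=F(u)-g$; this preserves monotonicity and hemi-continuity, and coercivity survives because $\langle \tilde F(u),u\rangle = \langle F(u),u\rangle - \langle g,u\rangle \ge \gamma(\|u\|)\|u\| - \|g\|_{*}\|u\|$ still tends to $+\infty$. Hence it suffices to solve $F(u^{*})=0$.

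Relying on separability (as in Definition \ref{def2}), I would fix a linearly independent system $\{w_{1},w_{2},\dots\}$ whose span is dense in $V$ and put $V_{n}=\operatorname{span}\{w_{1},\dots,w_{n}\}$. On each $V_{n}$ I seek a Galerkin solution $u_{n}=\sum_{i=1}^{n}c_{i}w_{i}$ satisfying $\langle F(u_{n}),w_{j}\rangle=0$ for $j=1,\dots,n$. Viewing the left-hand sides as a vector field $P:\mathbb{R}^{n}\to\mathbb{R}^{n}$, whose continuity follows from hemi-continuity, coercivity gives $\langle P(c),c\rangle=\langle F(u_{n}),u_{n}\rangle \ge \gamma(\|u_{n}\|)\|u_{n}\|>0$ whenever $\|u_{n}\|$ is large. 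The usual Brouwer-type corollary then produces a zero of $P$, that is, a Galerkin solution $u_{n}$, and the same coercivity estimate forces $\gamma(\|u_{n}\|)\|u_{n}\|\le 0$, so $\{u_{n}\}$ is bounded in $V$. Since $V$ is reflexive, a subsequence satisfies $u_{n}\rightharpoonup u$.

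The crux, and the step I expect to be the main obstacle, is passing to the limit: because $F$ is only hemi-continuous, $F(u_{n})\to F(u)$ is not available, and this is precisely where Minty's trick is essential. For fixed $v\in V_{m}$ and $n\ge m$, monotonicity together with the Galerkin identities $\langle F(u_{n}),u_{n}\rangle=0$ and $\langle F(u_{n}),v\rangle=0$ gives $\langle F(v),v-u_{n}\rangle\ge0$; letting $n\to\infty$ with $u_{n}\rightharpoonup u$ and $F(v)\in V^{*}$ yields $\langle F(v),v-u\rangle\ge0$, which extends to all $v\in V$ by density. Substituting $v=u-tw$ with $t>0$ and arbitrary $w\in V$, so that $v-u=-tw$, dividing by $-t$, and sending $t\to0^{+}$ via hemi-continuity gives $\langle F(u),w\rangle\le0$; replacing $w$ by $-w$ forces $F(u)=0$, i.e.\ $F(u^{*})=g$ in the original problem. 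Finally, if $F$ is strictly monotone and $F(u_{1})=F(u_{2})=g$ with $u_{1}\ne u_{2}$, then $0=\langle F(u_{1})-F(u_{2}),u_{1}-u_{2}\rangle>0$, a contradiction, so the solution is unique.
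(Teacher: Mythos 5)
The paper does not actually prove this theorem: it is quoted as a classical result and the proof is delegated to \cite{askhabov2011nonlinear} and \cite[Chapter 19]{krasno}. Your argument is the standard Galerkin--Brouwer--Minty proof, and most of it is sound: the reduction to $F(u)=0$, the solvability of the finite-dimensional systems via the acute-angle corollary of Brouwer's theorem, the a priori bound from coercivity, the extraction of a weakly convergent subsequence, the final radial step with $v=u-tw$, and the uniqueness argument are all correct. Two smaller caveats: the theorem as stated assumes only reflexivity, so separability is an extra hypothesis you are importing (harmless for the spaces used in this paper, but a reduction to the separable case would be needed in general), and the continuity of the finite-dimensional field $P$ does not follow from hemi-continuity alone --- you need monotonicity as well (an everywhere-defined monotone hemi-continuous map is demicontinuous, hence continuous in finite dimensions).

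The genuine gap is the clause ``which extends to all $v\in V$ by density.'' The functional $v\mapsto\langle F(v),v-u\rangle$ is not known to be continuous, since $F$ is only hemi-continuous, so knowing $\langle F(v),v-u\rangle\ge 0$ on the dense subspace $\bigcup_m V_m$ does not give it on all of $V$; and you genuinely need it for $v=u-tw$, which in general lies outside $\bigcup_m V_m$. The standard repairs are: (i) show that $(F(u_n))_n$ is bounded in $V^*$ --- from monotonicity and $\langle F(u_n),u_n\rangle=0$ one gets $-\langle F(u_n),w\rangle\ge\langle F(w),u_n-w\rangle\ge -C_w$ for each fixed $w$, and Banach--Steinhaus then gives uniform boundedness --- extract $F(u_n)\rightharpoonup \chi$, deduce $\chi=0$ from the Galerkin identities and density, and run the monotonicity inequality $\langle F(v)-F(u_n),v-u_n\rangle\ge0$ directly for arbitrary $v\in V$, using $\langle F(u_n),v-u_n\rangle=\langle F(u_n),v\rangle\to\langle\chi,v\rangle=0$; or (ii) invoke the lemma that an everywhere-defined monotone hemi-continuous operator is demicontinuous, which legitimizes the density passage. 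Either way an additional lemma is required; as written, the limit passage does not close.
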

	\begin{proof}
		See the details of the proof in { either} \cite{askhabov2011nonlinear} or { \cite[Chapter 19]{krasno}.}
	\end{proof}
	%--------------------------------------------------
	\subsection{Functional spaces}
	To formulate an appropriate { functional} space
	so that { the problem \eqref{asli}} {  is} well-posed,  we first recall the definition of fractional-order Sobolev spaces. As usual, the standard Lebesgue spaces are
	denoted by $L^{p}\left(  \Omega\right)  $, and their norms by $\left\Vert \cdot\right\Vert _{L^{p}\left(  \Omega\right)  }$. For $p=2$, the scalar
	product is denoted by $\left(  u,v\right)  =\int_{\Omega}u(x){v(x)}\mathrm{d}x$, and the norm by $\left\Vert \cdot\right\Vert =\left(  \cdot,\cdot\right)  ^{1/2}$.
	Let $\lbrace \lambda_n\rbrace_{n\in \mathbb{N}} $ { be} the set of all eigenvalue{ s} of the { following} boundary-value problem
	\begin{align}\label{odesl}
	\begin{split}
	D^2u(x)&=-\lambda u(x), \quad x\in \Omega ,\\
	\frac{du}{dt}(0)&=u(1)=0 ,
	\end{split}
	\end{align}
	where  $ \phi_n $ is an eigenfunction related to $ \lambda_n$ for $ n \in \mathbb{N} $.
	% Clearly, $\lbrace\phi_n\rbrace_{n\in \mathbb{N}} $ form a complete orthonormal basis for  $ L^2(\Omega)$.
	Now, for $ s \in \mathbb{R},$ a Hilbert scale $ H^s(\Omega)$ is defined based on $ \lbrace\phi_n\rbrace_{n\in \mathbb{N}} $ with the  following scalar products and norms
	\begin{equation}\label{iner}
	(u,v)_{H^s(\Omega)}=\sum_{n=1}^{\infty}\lambda^s (u,\phi_n)(v,\phi_n),\quad 
	u,v\in \text{span} \{\phi_n\}_{n\in \mathbb{N}}, 
	\end{equation}
	and
	\[\Vert u\Vert_{H^s(\Omega)}=(u,u)^{\frac{1}{2}}_{H^s(\Omega)}.\]
	Let $\lambda_n:=\mu_n^2$, then  $ \lbrace\mu_n^{-s}\phi_n\rbrace_{n\in \mathbb{N}} $ form{ s} an orthonormal basis for $ H^s(\Omega). $ 	
	%------------------------------------------------------------------	
	It is well-known that $\{ \phi_n \}_{n \in \mathbb{N}}$ is an orthonormal basis for $H^0(\Omega) =L^2{(\Omega)}$, so for $u_{n}=( u, \phi_n)$, we have $u=\sum_{n=1}^{\ \infty} u_{n} \phi_{n}$.
	For any $s\geq 0$, the fractional order Sobolev space is defined  by the spectral properties of the operator \eqref{odesl} and { the} inner product \eqref{iner} as follows 
	\begin{equation}
	H^{s}(\Omega):=\big\{ u \in L^{2}(\Omega) \mid \sum_{k=1}^{\ \infty} \lambda^{s}_{k} u^{2}_k < \infty  \big \},
	\end{equation}
	{ for more details see \cite{antil2017note}. }
	%-------------------------------------------------------	
	Another approach to define the fractional Sobolev space is using the definition of $L^p(\Omega)$ spaces along with the Slobodeckij  semi-norm \cite{di2012hitchhiker}. { To our aim}, it suffices to set $p=2$ and  let $\left\lfloor s\right\rfloor $ denote the
	largest integer for which $\left\lfloor s\right\rfloor \leqslant s$,  and
	define $\lambda\in\left[  0,1\right[  $,  by $s=\left\lfloor s
	\right\rfloor +\lambda$. For $s\in\mathbb{R}_{>0}\backslash\mathbb{N}$, we
	introduce the scalar product
	
	\begin{align}
	\left(  \varphi,\psi\right)  _{H^{s}(\Omega)} &  :=\sum_{\alpha
		\leqslant\left\lfloor s\right\rfloor }{\left(  D^{\alpha}\varphi,D^{\alpha
		}\psi\right)  }\label{lscalar}\\
	&  +{\int_{\Omega}\int_{\Omega}{\frac{\left(  D^{\left\lfloor s\right\rfloor}\varphi(x)-D^{\left\lfloor s\right\rfloor}\varphi(y)\right)
				{\left(  D^{\left\lfloor s\right\rfloor}\psi(x)-D^{\left\lfloor s\right\rfloor}\psi(y)\right)  }}%
			{|x-y|^{1+2\lambda}}\mathrm{d}x\mathrm{d}y}},\nonumber
	\end{align}
	and the norm $\Vert\varphi\Vert_{H^{s}(\Omega)}:=\left(  \varphi
	,\varphi\right)  _{H^{s}(\Omega)}^{1/2}$. For $s\in\mathbb{N}$, obviously the
	second term in \eqref{lscalar} is ignored. Then, the Sobolev space $H^{s
	}\left(  \Omega\right)  $ is given by
	\[
	H^{s}\left(  \Omega\right)  :=\left\{  u\in L^{2}(\Omega)\mid\forall~0\leq
	k\leq\left\lfloor s\right\rfloor \quad u^{\left(  k\right)  }\in
	L^{2}\left(  \Omega\right)  \text{\quad and\quad}\Vert u\Vert_{H^{s}%
		(\Omega)}<\infty\right\}  .
	\]
	The dual space of $H^{s}(\Omega)$ is denoted by { $H^*(\Omega): = H^{-s}(\Omega)$} and is
	equipped with the {norm}
	\begin{equation}
	\Vert u\Vert_{H^{-s}(\Omega)}:=\sup\limits_{v\in H^{s}(\Omega)}%
	\frac{(u,v)}{\Vert v\Vert_{H^{s}(\Omega)}},
	\end{equation}
	where $(\cdot,\cdot)$ denotes the continuous extension of the $L^{2}$-scalar
	product to the duality pairing $\langle\cdot,{\cdot}\rangle$ in
	$H^{-s}(\Omega)\times H^{s}(\Omega)$.
	Let $ \tilde{H}^{s}(\Omega) $ be the set of { functions in $ H^{s}(\Omega) $ which are extended by zero to the whole domain $ \mathbb{R}$. This space could also be defined by the intermediate space of the order $ s \in (0,1)$  given as}
	\begin{eqnarray}
	H_{0}^s(\Omega)=[H_0^m(\Omega), H^{0}(\Omega)]_{\theta}, \quad m\in \mathbb{Z},
	\end{eqnarray}
	where $ m(1-\theta)=s $  {  and  $H^m_0(\Omega)$ denotes the closure of  $ \mathcal{D}(\Omega)$ in $H^m (\Omega)$  \cite{lions2012non}. In the latter,  $ \mathcal{D}(\Omega)$ stands for the set of all infinitely differentiable functions with compact support which is equipped with the locally convex topology. }
	Indeed, for $ \phi\in {H}_{0}^{s}(\Omega) $, $ \phi $ and its derivatives of { the} order $ k\leq m $ have the compact support property. For $m=1$,  we set 
	$\tilde{H}^{s}(\Omega):= H_{0}^{1-s}(\Omega)$.
	{ Let $I_L$ (respectively $I_R$) denote  half interval $(-\infty, b)$ (res. $(a, \infty)$), then $\tilde{H}^{s}_{L}(\Omega)$ (res. $\tilde{H}^{s}_{R}(\Omega)$) stands for the set of all $u \in {H}^s(\Omega)$ whose extension by zero denoted by $\tilde{ u}$ are in   ${H}^{s}(I_L)$ (res. ${H}^{s}(I_R)$) \cite{adams, jin2015}. }
	
	\begin{theorem}\label{thm1} (\cite{jin2015})
		Assume that  $ n-1<s<n$, for $n\in \mathbb{N}$. The operators $ ^{R}_{0}D_x^s u $ and $ ^{R}_{x}D_1^s u $ for $ u\in \mathcal{D}(\Omega)$ { can} be extended continuously to operators with the same notations from $ \tilde{H}^s(\Omega) $ to $ L^2(\Omega)$, i.e.,
		\begin{equation}
		\Vert ^{R}_{0}D_x^s u \Vert_{L^2(\mathbb{R})}\leq c\Vert u\Vert_{\tilde{H}^s(\Omega)},
		\end{equation}    
		and
		\begin{equation}
		\Vert ^{R}_{x}D_1^s u \Vert_{L^2(\mathbb{R})}\leq c\Vert u\Vert_{\tilde{H}^s(\Omega)}.
		\end{equation}
	\end{theorem}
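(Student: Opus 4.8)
The plan is to prove the two estimates first on the dense subspace $\mathcal{D}(\Omega)$ by means of the Fourier transform and Plancherel's theorem, and then to extend the operators by continuity using the density of $\mathcal{D}(\Omega)$ in $\tilde{H}^s(\Omega)$. The central observation is that for $u\in\mathcal{D}(\Omega)$, passing to the zero extension $\tilde u$ on all of $\mathbb{R}$ converts the finite-interval Riemann-Liouville derivative into the whole-line (Liouville-Weyl) derivative, whose action is diagonalized by the Fourier transform.

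First I would reduce the problem to the whole real line. Since $u\in\mathcal{D}(\Omega)$ vanishes outside $\Omega$, its zero extension $\tilde u$ satisfies $\tilde u(y)=0$ for $y<0$, so in the defining integral \eqref{eq1} the lower limit $0$ may be replaced by $-\infty$ without altering the value of $\left({}_{0}I^{n-s}_x u\right)(x)$ for $x\in\Omega$. Consequently ${}^{R}_{0}D^s_x u = {}^{R}_{-\infty}D^s_x \tilde u$ on $\Omega$, where ${}^{R}_{-\infty}D^s_x$ denotes the Liouville-Weyl derivative on $\mathbb{R}$, which I take as the continuous extension. The same reasoning, using that $\tilde u$ vanishes for $y>1$, identifies ${}^{R}_{x}D^s_1 u$ with its whole-line counterpart. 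Recalling that ${}^{R}_{-\infty}D^s_x = D^n\,{}_{-\infty}I^{n-s}_x$, and that ${}_{-\infty}I^{\alpha}_x$ acts as the Fourier multiplier $(i\xi)^{-\alpha}$ while $D^n$ acts as $(i\xi)^n$, the composition has symbol $(i\xi)^{n}(i\xi)^{-(n-s)}=(i\xi)^s$ of modulus $|\xi|^s$; the right-sided derivative carries the conjugate symbol $(-i\xi)^s$, again of modulus $|\xi|^s$.

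By Plancherel's theorem I then obtain, with a normalization constant depending only on the Fourier convention,
\begin{align*}
\left\Vert {}^{R}_{0}D^s_x u\right\Vert^2_{L^2(\mathbb{R})}
&= \frac{1}{2\pi}\int_{\mathbb{R}} |\xi|^{2s}\, |\widehat{\tilde u}(\xi)|^2\,\mathrm{d}\xi \\
&\le \frac{1}{2\pi}\int_{\mathbb{R}} \left(1+|\xi|^2\right)^{s}\, |\widehat{\tilde u}(\xi)|^2\,\mathrm{d}\xi,
\end{align*}
where the inequality uses $|\xi|^{2s}\le\left(1+|\xi|^2\right)^{s}$. The right-hand side is, up to equivalence of norms, exactly $\Vert \tilde u\Vert^2_{H^s(\mathbb{R})}$, which by the very definition of $\tilde{H}^s(\Omega)$ as the space of functions whose zero extension lies in $H^s(\mathbb{R})$ equals $\Vert u\Vert^2_{\tilde{H}^s(\Omega)}$. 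This yields the claimed bound with an explicit constant $c$ on $\mathcal{D}(\Omega)$, and the identical argument with symbol $(-i\xi)^s$ handles ${}^{R}_{x}D^s_1 u$. Since $\mathcal{D}(\Omega)$ is dense in $\tilde{H}^s(\Omega)$, both operators extend uniquely and continuously, with the same bound, and their $L^2(\Omega)$ norms are controlled a fortiori by the $L^2(\mathbb{R})$ norms above.

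The main obstacle will be the rigorous justification of the symbol computation for the range $s\in(1,2)$, where $n=2$: one must verify that the classical derivative $D^n$ genuinely commutes with the fractional integral applied to $\tilde u$, and that no boundary contributions survive when identifying the operator with the Fourier multiplier $(i\xi)^s$. This is precisely where the zero-extension hypothesis, equivalently $u\in\tilde{H}^s(\Omega)$ rather than merely $u\in H^{s}(\Omega)$, is indispensable, since it forces the relevant endpoint terms at $0$ and $1$ to vanish and renders the branch of $(i\xi)^s$ unambiguous on $\mathbb{R}\setminus\{0\}$.
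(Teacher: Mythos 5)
The paper offers no proof of its own here --- it simply cites \cite{jin2015} --- and your argument is precisely the standard Fourier-multiplier proof given in that reference: extend by zero, identify the finite-interval Riemann--Liouville derivative with the whole-line operator of symbol $(\pm i\xi)^s$, apply Plancherel with $|\xi|^{2s}\le(1+|\xi|^2)^s$, and conclude by density of $\mathcal{D}(\Omega)$ in $\tilde{H}^s(\Omega)$. The proposal is correct and takes essentially the same route as the paper's source.
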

	%---------------------------------------------------------------------
	The following theorem represents some profitable characteristics of { the} fractional differential and integral operators.
	\begin{theorem}\label{taf}
		The following statements hold:
		\begin{itemize}
			\item[a)] The integral operators $_{0}I_x^s$ and $_{x}I_1^s$ satisfy the semi-group property.
			\item[b)] For $\phi,\psi\in L^2(\Omega)$, $(_{0}I_x^s\phi, \psi)=(\phi,\, _{x}I_1^s\psi)$.
			\item[c)] For any $ s>0, $  the function $ x^{s}\in {H}^\alpha(\Omega),$ where $ 0\leq\alpha <s+\frac{1}{2}. $
			\item[d)]  For any non-negative  $ \alpha, \gamma,$ the Riemann-Liouville integral operator $ I^\alpha $ is a bounded map from $ \tilde{H}^{\gamma}(\Omega) $ into $ \tilde{H}^{\gamma+\alpha}(\Omega). $ 
			\item[e)] The operators $ _{0}^{R}D_{x}^{s}:\tilde{H}^s_L(\Omega)\rightarrow L^2(\Omega)$ and  $ _{x}^{R}D_{1}^{s}:\tilde{H}^s_R(\Omega)\rightarrow L^2(\Omega)$ are continuous.
			\item[f)] For any $ s\in (0,1) $ and $ u\in \tilde{H}^1_R(\Omega)  $, $ _{0}^{R}D_{x}^{s}u=_{0}^{R}I_{x}^{1-s} u^\prime$. Meanwhile, $ u\in \tilde{H}^1_L(\Omega)  $, then $ _{x}^{R}D_{1}^{s}u=- _{x}^{R}I_{1}^{1-s} u^\prime$. 
		\end{itemize}
	\end{theorem}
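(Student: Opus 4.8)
The six assertions fall into two groups: the algebraic identities (a), (b), (f), which I would settle by direct manipulation of the defining integrals, and the genuine mapping statements (c), (d), (e), where the analytic content resides. For (a) the plan is to write out the iterated integral defining $_{0}I_x^{s}\,_{0}I_x^{t}u$ for $u\in L^2(\Omega)$, interchange the order of integration by Fubini over the admissible region $\{0\le y\le\tau\le x\}$, and reduce the inner $\tau$-integral to a Beta integral via the substitution $\tau=y+r(x-y)$; the identity $B(s,t)=\Gamma(s)\Gamma(t)/\Gamma(s+t)$ then collapses the two kernels into the single kernel $(x-y)^{s+t-1}/\Gamma(s+t)$, which is exactly $_{0}I_x^{s+t}u$, and the right-sided operator is handled identically. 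Part (b) is again just Fubini: expanding $(_{0}I_x^s\phi,\psi)$ as a double integral over $\{0\le y\le x\le 1\}$ and swapping the roles of the variables turns the left integral into the right one, yielding $(\phi,\,_{x}I_1^s\psi)$.

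For (f), I would start from $_{0}^{R}D_x^s u=D\,_{0}I_x^{1-s}u$ and integrate by parts inside the convolution $_{0}I_x^{1-s}u$. Membership of $u$ in the one-sided space $\tilde H^1_R(\Omega)$ forces its zero-extension to lie in $H^1$ of the half line, so the relevant endpoint value of $u$ vanishes and the boundary term produced by the integration by parts disappears; differentiating the remaining integral (whose boundary contribution also vanishes because $1-s>0$) reproduces $_{0}I_x^{1-s}u'$, with the reflected identity following from the same computation and the extra $(-1)$ accounting for the sign. This is precisely the statement that, under a homogeneous endpoint condition, the Riemann--Liouville and Caputo derivatives agree.

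For (c), the strategy is to estimate the norm \eqref{lscalar} of $x^s$ directly: the integer derivatives satisfy $D^{k}x^{s}\sim x^{s-k}$ near the origin, which lies in $L^2(\Omega)$ exactly when $2(s-k)>-1$, i.e.\ $k<s+\tfrac12$, while the Slobodeckij seminorm of the top derivative supplies the fractional part of the order and is governed by the same threshold. The delicate point is that the double integral carries a singularity along the diagonal $x=y$ and a second one at the endpoint $x=0$, so I would split the domain into $\{x<2y\}$ and $\{x\ge 2y\}$ (and symmetrically) to decouple the two effects, obtaining finiteness precisely for $\alpha<s+\tfrac12$. For (d) I would argue by interpolation: first establish $I^\alpha:\tilde H^\gamma(\Omega)\to\tilde H^{\gamma+\alpha}(\Omega)$ at integer orders, where it reduces—via the semigroup property (a) and the smoothing estimate of Theorem \ref{thm1}—to raising the differentiability order by $\alpha$ while preserving the vanishing trace, and then fill in the intermediate orders through the identity $H_0^s(\Omega)=[H_0^m(\Omega),H^0(\Omega)]_\theta$ recorded in the Preliminaries. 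Part (e) is then immediate: writing $_{0}^{R}D_x^s=D^n\circ\,_{0}I_x^{n-s}$ and combining Theorem \ref{thm1} (or (d)) with linearity gives continuity on $\tilde H^s_L(\Omega)$, and symmetrically for the right derivative on $\tilde H^s_R(\Omega)$.

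I expect the main obstacle to be (d) together with (c): pinning down the sharp order-raising behaviour of $I^\alpha$ on the scale of zero-extension spaces. Because $\tilde H^\gamma(\Omega)$ encodes a homogeneous boundary condition, one must verify both that $I^\alpha$ preserves the vanishing trace and that the interpolation spaces genuinely coincide with $\tilde H^{\gamma+\alpha}(\Omega)$ at non-integer exponents, rather than with a nearby but distinct space; this trace and interpolation bookkeeping, together with the companion endpoint analysis for $x^s$, is where the argument must be carried out with care, whereas (a), (b), (e), (f) are essentially formal once these are in hand.
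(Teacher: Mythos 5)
The paper itself supplies no argument here: it discharges (a) to \cite[Theorem 2.4]{kilbas2006theory}, (b) to Fubini via \cite[Lemma 2.7]{kilbas2006theory}, and (c)--(f) wholesale to \cite{jin2015}. Measured against the proofs in those references, your reconstruction follows the same route for most items: Fubini plus the Beta integral for the semigroup property, Fubini for the adjoint identity (b), a direct Slobodeckij-norm computation split near the diagonal and near $x=0$ for (c), composition $D^n\circ{}_0I_x^{n-s}$ for (e), and integration by parts for (f). These parts are correct and essentially standard.

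Two places deserve a warning. First, in (d) the genuine content is the non-integer base case ${}_0I_x^{\alpha}:L^2(\Omega)\to\tilde H^{\alpha}_L(\Omega)$; once that is in hand, the shift to general $\gamma$ and the interpolation in $\gamma$ are routine. But neither of the tools you invoke produces it: the semigroup property is purely algebraic, and Theorem \ref{thm1} gives only $\Vert {}_0^RD_x^{\alpha}v\Vert_{L^2}\leq c\Vert v\Vert_{\tilde H^{\alpha}}$, which is the inequality in the wrong direction. One needs the reverse (coercivity/norm-equivalence) estimate, e.g.\ via the Fourier-transform argument on the half line used in \cite{jin2015,eggermont88}, and your sketch asserts rather than supplies it --- you correctly identify this as the crux, but the plan as written does not close it. Second, in (f) your phrase ``the relevant endpoint value of $u$ vanishes'' hides a mismatch: integrating ${}_0I_x^{1-s}u$ by parts produces the boundary term $u(0)x^{-s}/\Gamma(1-s)$ after differentiation, so the identity ${}_0^RD_x^{s}u={}_0I_x^{1-s}u'$ requires $u(0)=0$, i.e.\ $u\in\tilde H^1_L(\Omega)$; membership in $\tilde H^1_R(\Omega)$ only forces $u(1)=0$ and does not kill that term. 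The statement as printed appears to have the subscripts $L$ and $R$ interchanged (compare the corresponding lemma in \cite{jin2015}), and your argument, taken literally under the stated hypothesis, would leave the boundary term alive; you should either note the typo or carry the hypothesis that actually annihilates the boundary contribution.
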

	\begin{proof}
		{  The first item has been investigated} in \cite[Theorem 2.4]{kilbas2006theory}.	{ The Fubini's Theorem  proves item b \cite[Lemma 2.7]{kilbas2006theory}}. The proof of other items can be found in \cite{jin2015}.
	\end{proof}
%---------------------------------------------------------------------	
\subsubsection{Nonlinear functions on Orlicz spaces}
	Throughout this paper, we require some important properties for the nonlinear  part of Eq. (\ref{asli}). A suitable { functional} space to deal with the monotone operators with nonlinear terms { is the} Orlicz spaces  or { the} generalized Orlicz space {   which is called as Musielak-Orlicz} space  \cite{ adams, bardaro2008nonlinear}.
	We recall some necessary definitions and properties related to the mentioned spaces.  
	{	\begin{definition}
			A function $A: \mathbb{R}\rightarrow \left[0,\infty\right]$	is termed an $N$-function if 
			\begin{itemize}
				\item[a)]  It is even and convex;
				\item[b)]  $A(t)=0$ if and only if $t=0$;
				\item[c)] $\lim_{t\rightarrow 0} \frac{A(t)}{t} = 0$, $\lim_{t\rightarrow \infty} \frac{A(t)}{t} =\infty$.
			\end{itemize}
		\end{definition}
		For more details on $N$-function, one can see \cite[Chapter VIII]{adams}, \cite[Chapter I]{rao} and \cite[Appendix]{warma}. 
		%------------------------
		\begin{definition}(\cite[Chapter II]{mendez2019analysis})
			Let $(\Omega, \Sigma, \mu)$ be a  measure space such that $\mu$ is $\sigma$-finite and complete. We say that a function  $\varphi : \Omega \times \mathbb{R} \rightarrow \left[ 0, \infty\right]$  is a Musielak-Orlicz function on $\Omega$ if 
			\begin{itemize}
				\item[a)] $\varphi(.,t)$ is measurable for all $t\in  \left[ 0, \infty\right]$;
				\item[b)] For $\text{ a.e.}~ x\in \Omega, \varphi(x,.)$ is non-trivial, even and convex;
				\item[c)] $\varphi(x,.)$ is vanishing and continuous at $0$ for $\text{ a.e.}~ x\in \Omega$;
				\item[d)] $\varphi(x,t)>0$, $\forall t>0$ and $\lim_{t \rightarrow \infty} \varphi(x,t) = \infty$. 
			\end{itemize}
		\end{definition}
		In this paper, $\Sigma$ is the $\sigma$-algebra of subsets of  $\Omega$ and $\mu$ denotes the Lebesgue measure on $\Sigma$. 
		The complementary Musielak-Orlicz function $\tilde{\varphi}$ is defined by 
		\[
		\tilde{\varphi}(x,t) : =  \sup\big\{ s\vert t\vert - \varphi(x,s)~ |~ s>0 \big\},
		\]
		which is the same as the complementary of the Young function (a more general class of $N$-functions) defined in \cite{warma, rao}.  
	}
	
\noindent	\textbf{Assumption A.}{ (\cite{antil2017note})}
		Assume that a nonlinear function $ g(x,t):\Omega \times \mathbb{R}  \rightarrow \mathbb{R} $  satisfies the following properties
		\[
		\left\{
		\!
		\begin{aligned}
		&g(x,.) \: \text{continuous, odd, strictly monotone,} \quad & \text{a.e. on } \Omega,\\
		&g(x,0)=0, \quad  \lim\limits_{t\rightarrow \infty} g(x,t)=\infty, & \text{a.e. on } \Omega,\\
		&g(.,t) \:\text{is measurable}, & \forall t \in \mathbb{R}.
		\end{aligned}
		\right.
		\]
	
	{ Note that the inverse of the function $ g(x,.) $ exists due to  the strictly monotone property.} Let us denote it by {$\tilde{g}(x,.).$}  We define $G(x,t)$ and $ \tilde{G}(x,t)$ by 
	\[G(x,t):=\int_{0}^{\vert t\vert }g(x,s)\mathrm{d}s,\quad \quad \tilde{G}(x,t):=\int_{0}^{\vert t\vert }\tilde{g}(x,s)\mathrm{d}s.\]
	These functions are complementary Musielak-Orlicz functions that are ${N} $-functions { with} respect to the second variable.
	\begin{definition}\label{def1}
		Let $ G(x,.) $ be an ${N}$-function. We say this function satisfies the global ($ \Delta_2 $)-condition if there exists a constant $ c\in(0,1] $ such that for $ \text{ a.e.} ~x\in \Omega  $ and for all $ t\geq 0 $
		\[c\, t\, g(x,t)\leq G(x,t)\leq t\,g(x,t),\]
		where the function $ g(x,t) $ { satisfies in } the Assumption A.
	\end{definition}
	{Let $\mathcal{M}(\Omega)$ represents all real-valued measurable functions defined on $\Omega$, then  
		the Musielak-Orlicz space generated by $G$ is defined as follows
		\[L_G(\Omega):=\Big\lbrace u \in \mathcal{M}(\Omega) \mid G(.,u(.))\in L^1(\Omega)\Big\rbrace,\]
		which means that the modular
		\begin{equation*}
		\begin{split}
		\rho_G&: \mathcal{M}(\Omega) \rightarrow \left[0,  \infty\right],\\
		\rho_{G} (u)&:= \int_{\Omega}G(t,u(t))\mathrm{d}t,
		\end{split}
		\end{equation*}
		is measurable \cite{bardaro2008nonlinear}.
		If $G(x,.)$ and $\tilde{G}(x,.)$ satisfy  Definition \ref{def1}, then by the Theorem 8.9 in \cite{adams}, it is concluded that $L_G(\Omega)$   equipped with the Luxemburg norm;
		\[\Vert u\Vert_{G,\Omega}:=\inf\Big\lbrace m>0 \mid \rho_{G}(\frac{u}{m})\leq 1\Big\rbrace,\]
		{ is a reflexive Banach space}.
		The same result is { valid} for the space $ L_{\tilde{G}}(\Omega)$ with the norm $\Vert .\Vert_{\tilde{G},\Omega}$.}
	Moreover, in our analysis we need  the generalized H\"{o}lder inequality  given by
	\begin{equation}
	\Big\vert \int_\Omega u(t)v(t)  \mathrm{d}t \Big\vert \leq 2\Vert u\Vert_{G,\Omega} \Vert v \Vert_{\tilde{G},\Omega}, \quad \forall u \in L_{G}(\Omega), ~~ \forall v\in L_{\tilde{G}}(\Omega).
	\end{equation} 
	{ There are some proofs for this relation that one can see them in  \cite[Chapter 3]{function} and \cite{adams}.} 
	Furthermore, the following important result
	\begin{equation}\label{inf}
	\lim\limits_{\Vert u\Vert_{G, \Omega}\rightarrow \infty} \dfrac{\rho_{G}(u)}{\Vert u\Vert_{G, \Omega}}=\infty,
	\end{equation}
	which is obtained in \cite{warma} has a significant role in the applicability of the monotone operator theorems for our target. 
	\begin{lemma}\label{lem1}(\cite{warma})
		If $ g(.,u(.)) $ satisfies { the} ($ \Delta_2 $)-condition, then for all  $u \in L_{G}(\Omega)$ one can get $ g(.,u(.))\in L_{\tilde{G}}(\Omega)$.
	\end{lemma}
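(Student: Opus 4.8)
The plan is to show directly that the modular $\rho_{\tilde{G}}\big(g(\cdot,u(\cdot))\big)=\int_\Omega \tilde{G}\big(x,g(x,u(x))\big)\,\mathrm{d}x$ is finite whenever $u\in L_G(\Omega)$; by the definition of $L_{\tilde{G}}(\Omega)$ this is exactly the assertion of the lemma. The crucial ingredient is Young's equality. The excerpt records that $G$ and $\tilde{G}$ are complementary Musielak-Orlicz functions; concretely, since $\tilde{g}(x,\cdot)$ is the inverse of the continuous, strictly monotone function $g(x,\cdot)$ furnished by Assumption A, and both functions are recovered as the integrals $G(x,t)=\int_0^{|t|}g(x,s)\,\mathrm{d}s$ and $\tilde{G}(x,t)=\int_0^{|t|}\tilde{g}(x,s)\,\mathrm{d}s$, the pointwise identity
\[
G(x,t)+\tilde{G}\big(x,g(x,t)\big)=t\,g(x,t),\qquad t\ge 0,\ \text{a.e. }x\in\Omega,
\]
holds. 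Geometrically this is the equality case of the Young inequality $st\le G(x,s)+\tilde{G}(x,t)$, which is attained precisely when $t=g(x,s)$; here this case occurs because $g(x,\cdot)$ is continuous and strictly increasing with $g(x,0)=0$.

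First I would rewrite this identity as $\tilde{G}(x,g(x,t))=t\,g(x,t)-G(x,t)$ and then invoke the $(\Delta_2)$-condition of Definition \ref{def1}. The lower bound $c\,t\,g(x,t)\le G(x,t)$ gives $t\,g(x,t)-G(x,t)\le(1-c)\,t\,g(x,t)$, while the same lower bound also yields $t\,g(x,t)\le \tfrac{1}{c}\,G(x,t)$. Combining these two estimates produces the pointwise bound
\[
\tilde{G}\big(x,g(x,t)\big)\le \frac{1-c}{c}\,G(x,t),\qquad t\ge 0,\ \text{a.e. }x\in\Omega.
\]
Setting $t=u(x)$ and integrating over $\Omega$ then gives $\rho_{\tilde{G}}\big(g(\cdot,u(\cdot))\big)\le\frac{1-c}{c}\,\rho_G(u)$.

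Finally, since $u\in L_G(\Omega)$ means $\rho_G(u)=\int_\Omega G(x,u(x))\,\mathrm{d}x<\infty$, the displayed inequality forces $\rho_{\tilde{G}}\big(g(\cdot,u(\cdot))\big)<\infty$, whence $g(\cdot,u(\cdot))\in L_{\tilde{G}}(\Omega)$, as required. The measurability of $x\mapsto \tilde{G}\big(x,g(x,u(x))\big)$ needed to make the integrals meaningful follows from the measurability requirement on $g(\cdot,t)$ in Assumption A together with the continuity of $g(x,\cdot)$ and $\tilde{g}(x,\cdot)$ in the second argument.

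I expect the main obstacle to be the careful justification of the pointwise Young equality rather than the subsequent estimates, which are elementary algebra on the $(\Delta_2)$ inequalities. For fixed a.e.\ $x$ one must check that the graphs of $g(x,\cdot)$ and $\tilde{g}(x,\cdot)$ are genuine reflections of one another, so that the two areas defining $G(x,t)$ and $\tilde{G}(x,g(x,t))$ exactly fill the rectangle of area $t\,g(x,t)$; strict monotonicity, which makes $\tilde{g}$ single-valued, and the normalization $g(x,0)=0$ are precisely what promote the Young inequality to an equality. A secondary point is joint measurability in $x$, which is the reason Assumption A imposes measurability of $g(\cdot,t)$ for every fixed $t$.
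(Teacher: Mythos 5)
Your argument is correct: the Young equality $t\,g(x,t)=G(x,t)+\tilde{G}\bigl(x,g(x,t)\bigr)$ (valid here because $\tilde g(x,\cdot)=g(x,\cdot)^{-1}$ and both $G$ and $\tilde G$ are given by the integral representations in the paper), combined with the lower half $c\,t\,g(x,t)\le G(x,t)$ of the $(\Delta_2)$-condition, yields the pointwise bound $\tilde{G}\bigl(x,g(x,t)\bigr)\le\frac{1-c}{c}\,G(x,t)$, which integrates to $\rho_{\tilde G}\bigl(g(\cdot,u)\bigr)\le\frac{1-c}{c}\,\rho_G(u)<\infty$ and hence to the claim under the paper's definition of $L_{\tilde G}(\Omega)$. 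The paper supplies no proof of its own --- it only cites \cite{warma} --- and your route is exactly the standard argument behind that citation, so your write-up is a correct self-contained substitute rather than a genuinely different approach.
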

	Now, we are ready to define the suitable function space which is appropriate for our problem.
	\begin{definition}\label{defv}
		Let $1< s<2$. Under the Assumption A { which has been} fulfilled in Definition \ref{def1}, consider the following  reflective Banach space $ U $ as
		\[U:=U(\Omega, G):=\lbrace \phi \in \tilde{H}^\frac{s}{2}(\Omega) \mid G(.,\phi)\in L^1(\Omega) \rbrace,\]
		where equipped with the norm
		\begin{equation}\label{normv}
		\Vert u\Vert_U:=\Vert u\Vert_{\tilde{H}^\frac{s}{2}(\Omega)}+\Vert u\Vert_{{ G,\Omega}}.
		\end{equation}
	\end{definition}
	%--------------------------------------------------------
	We state the following lemma from \cite{antil2017note} which {  is important } in the investigation of { the} regularity of the solution.
	\begin{lemma}\label{lem0}
		Let $0 \leq  s < 1$ and assume that for $M>0$, there exists a constant $l_{M}$ such that 
		$g$ satisfies 
		\begin{equation}\label{lip}
		\vert g(x,u_{1}) -g(x,u_{2})  \vert\leq l_{M} \vert  u_{1}- u_{2} \vert, \quad x,y \in  \Omega,\quad  u_{i}\in \mathbb{R} \ \text{with} \  \vert u_{i} \vert \leq M. 
		\end{equation}
		Then, for $u \in H^{s}(\Omega)\cap L^{\infty}(\Omega) $, we have $g(.,u(.)) \in H^{s}(\Omega)$. 
	\end{lemma}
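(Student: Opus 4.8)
The plan is to argue directly with the Slobodeckij form of the $H^s$-norm introduced in \eqref{lscalar}. For $0<s<1$ one has $\lfloor s\rfloor=0$ and $\lambda=s$, so that
\[
\Vert g(\cdot,u(\cdot))\Vert_{H^s(\Omega)}^2=\Vert g(\cdot,u(\cdot))\Vert_{L^2(\Omega)}^2+\int_\Omega\int_\Omega\frac{\vert g(x,u(x))-g(y,u(y))\vert^2}{\vert x-y\vert^{1+2s}}\,\mathrm{d}x\,\mathrm{d}y,
\]
and it suffices to bound the two summands separately. The endpoint $s=0$ is immediate, since then the double integral is absent and $H^0(\Omega)=L^2(\Omega)$.

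For the $L^2$-term I would use the structural hypotheses on $g$. Setting $M:=\Vert u\Vert_{L^\infty(\Omega)}$ and invoking $g(x,0)=0$ from Assumption A together with the local Lipschitz bound \eqref{lip}, one obtains the pointwise estimate $\vert g(x,u(x))\vert=\vert g(x,u(x))-g(x,0)\vert\le l_M\vert u(x)\vert\le l_M M$ for a.e. $x\in\Omega$. Hence $g(\cdot,u(\cdot))\in L^\infty(\Omega)\subset L^2(\Omega)$ on the bounded domain $\Omega$, which settles the first summand.

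The heart of the matter is the seminorm. I would write the numerator as
\[
g(x,u(x))-g(y,u(y))=\big[g(x,u(x))-g(x,u(y))\big]+\big[g(x,u(y))-g(y,u(y))\big].
\]
The first bracket is exactly the situation covered by \eqref{lip} in the second argument, giving $\vert g(x,u(x))-g(x,u(y))\vert\le l_M\vert u(x)-u(y)\vert$; substituting this into the double integral (after $(a+b)^2\le 2a^2+2b^2$) produces a constant multiple of $\int_\Omega\int_\Omega\vert u(x)-u(y)\vert^2\vert x-y\vert^{-1-2s}\,\mathrm{d}x\,\mathrm{d}y$, which is finite precisely because $u\in H^s(\Omega)$.

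The main obstacle is the second bracket, $g(x,u(y))-g(y,u(y))$, which measures the variation of $g$ in its first variable and is not controlled by \eqref{lip}: under the measurability-only assumption on $g(\cdot,t)$ in Assumption A it carries no quantitative modulus in $\vert x-y\vert$. The estimate therefore closes at once in the genuine superposition case where $g$ does not depend on $x$, yielding $\Vert g(\cdot,u(\cdot))\Vert_{H^s(\Omega)}\le \Vert g(u(\cdot))\Vert_{L^2(\Omega)}+\sqrt{2}\,l_M\,\big(\int_\Omega\int_\Omega\vert u(x)-u(y)\vert^2\vert x-y\vert^{-1-2s}\,\mathrm{d}x\,\mathrm{d}y\big)^{1/2}$; for a genuinely $x$-dependent $g$ one must supplement \eqref{lip} with a uniform H\"{o}lder modulus in $x$ of exponent exceeding $s$, and this is the delicate point that I expect the cited reference \cite{antil2017note} to furnish.
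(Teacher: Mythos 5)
The paper does not actually prove this lemma; it is quoted verbatim from \cite{antil2017note}, so there is no in-paper argument to compare against. Your reconstruction via the Slobodeckij seminorm is the standard and correct route: the $L^2$ bound obtained from $g(x,0)=0$ together with \eqref{lip}, the splitting of the numerator, and the control of $g(x,u(x))-g(x,u(y))$ by $l_M\vert u(x)-u(y)\vert$ are all sound, and they do yield the full conclusion in the case where $g$ has no explicit dependence on $x$.

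The obstruction you isolate in the second bracket $g(x,u(y))-g(y,u(y))$ is genuine and not an artifact of your method. As transcribed in the paper, \eqref{lip} quantifies over $x,y\in\Omega$ yet $y$ never appears in the inequality --- a strong indication that a modulus of continuity in the first variable was lost in transcription. Indeed the lemma is false as literally stated: take $g(x,t)=h(x)\,t$ with $h$ measurable, $1\le h\le 2$, but $h\notin H^{s}(\Omega)$; then Assumption A and \eqref{lip} hold (with $l_M=2$), yet for $u\equiv 1\in H^{s}(\Omega)\cap L^{\infty}(\Omega)$ one gets $g(\cdot,u(\cdot))=h\notin H^{s}(\Omega)$. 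The intended hypothesis in the cited source is of the form $\vert g(x,u_1)-g(y,u_2)\vert\le l_M\left(\vert x-y\vert+\vert u_1-u_2\vert\right)$; with it your second bracket is bounded by $l_M\vert x-y\vert$, and the resulting contribution $\int_\Omega\int_\Omega\vert x-y\vert^{1-2s}\,\mathrm{d}x\,\mathrm{d}y$ is finite for $s<1$ on the bounded interval $\Omega$, which closes the estimate. So your argument is the right one once the hypothesis is stated in full, and your diagnosis of exactly where it would otherwise fail is accurate.
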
 
	%-----------------------------------------
	
	\section{Variational formulation and regularity}\label{var}
	In this section, we aim to work with { an} appropriate variational formulation  to overcome the difficulty of dealing with the nonlinear and fractional terms of the main problem for both case{ s} of { the} Riemann-Liouville and the Caputo derivatives separately. The non-local variational problems  possess reduced order  smoothing properties which { are} investigated in this section. 
	\subsection{The Riemann-Liouville fractional operator}
	The appropriate variational formulation of the  problem \eqref{asli} { in the linear case (with $ g(x,u)=0 $)} introduced in \cite{jin2015}
	is:  
	
	Find {  $ u\in U=\tilde{H}^{\frac{s}{2}}(\Omega) $} such that
	\begin{equation}
	\mathcal{A}(u,v)=(f,v), \quad v\in V=U, 
	\end{equation}
	where
	\begin{equation}
	\mathcal{A}(u,v):=-(_{0}^{R}D^\frac{s}{2}_x u\, ,\,  _{x}^{R}D^\frac{s}{2}_1 v),
	\end{equation}
	and $f\in L^2(\Omega)$.
	
	Considering the above form for the fractional part has some pros { in} utilizing the nice properties indicated in Theorem \ref{taf} for the approximation procedure. 
	Hence{ ,} for  { the nonlinear} problem \eqref{asli}, the weak formulation  is stated as follows:
	
	Find $u \in U$	satisfying
	\begin{equation}\label{weak}
	\begin{split}
	\mathcal{L}(u,v)&:= \mathcal{A}(u,v)+ \mathcal{B}(u,v)=\langle f,v\rangle:=F(v),\quad \quad v \in U,   
	\end{split}
	\end{equation}
	where  $f \in U^* $, $ \mathcal{B}(u,v):=(g(x,u),v) $ and $ U^* $ {   is the dual space of the reflexive Banach space $ U$ introduced in Definition \ref{defv} and their duality map is denoted by $\langle . , . \rangle$}.
	%-----------------------------------------------
	%\subsubsection{Regularity of the solution}
	\begin{theorem}\label{thmd}
		Let $ 1<s<2 $ and $ u\in \tilde{H}^{\frac{s}{2}}(\Omega),$ the operator  $ \mathcal{A}(u,v) $ is coercive and monotone, i.e.,
		\[
		\exists ~c>0 \quad \text{s.t}\quad
		\mathcal{A}(u,u)\geq c\Vert u \Vert^2_{\tilde{H}^{\frac{s}{2}}(\Omega)}.\]
	\end{theorem}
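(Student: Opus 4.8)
The plan is to pass to the Fourier side, where the Riemann--Liouville derivatives become simple multiplier operators, and then to exploit the sign of $\cos(\tfrac{\pi s}{2})$ together with a Poincar\'e-type norm equivalence on $\tilde{H}^{\frac{s}{2}}(\Omega)$. First I would extend $u$ by zero to all of $\mathbb{R}$, which is legitimate since $u \in \tilde{H}^{\frac{s}{2}}(\Omega)$. On the Fourier side one has the symbol identities $\widehat{{}_{0}^{R}D^{\frac{s}{2}}_x u}(\omega) = (i\omega)^{s/2}\hat u(\omega)$ and $\widehat{{}_{x}^{R}D^{\frac{s}{2}}_1 u}(\omega) = (-i\omega)^{s/2}\hat u(\omega)$; these are exactly what underlies the continuity already recorded in Theorem~\ref{thm1}. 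Applying Plancherel's identity to $\mathcal{A}(u,u) = -({}_{0}^{R}D^{\frac{s}{2}}_x u, {}_{x}^{R}D^{\frac{s}{2}}_1 u)_{L^2(\mathbb{R})}$ then reduces the bilinear form to an integral of $(i\omega)^{s/2}\,\overline{(-i\omega)^{s/2}}\,|\hat u(\omega)|^2$ over $\mathbb{R}$.

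The next step is the algebra of the symbols. Writing $i\omega = |\omega|\exp(i\tfrac{\pi}{2}\operatorname{sgn}\omega)$, a direct computation gives $(i\omega)^{s/2}\,\overline{(-i\omega)^{s/2}} = |\omega|^{s}\exp(i\tfrac{\pi s}{2}\operatorname{sgn}\omega)$. Since $u$ is real-valued, $|\hat u|^2$ is even, so the odd (imaginary) part integrates to zero and only the real part survives, yielding the clean identity $\mathcal{A}(u,u) = -\cos(\tfrac{\pi s}{2})\,\Vert {}_{0}^{R}D^{\frac{s}{2}}_x u\Vert^2_{L^2(\mathbb{R})}$. Here the crucial point is that $s\in(1,2)$ forces $\tfrac{\pi s}{2}\in(\tfrac{\pi}{2},\pi)$, whence $\cos(\tfrac{\pi s}{2})<0$ and the prefactor $-\cos(\tfrac{\pi s}{2})$ is strictly positive; this is precisely the purpose of the minus sign in the definition of $\mathcal{A}$. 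Non-negativity of $\mathcal{A}(u,u)$ is then immediate, and monotonicity of the associated operator follows at once from bilinearity, since $\mathcal{A}(u-v,u-v)\ge 0$ for all $u,v$.

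It remains to upgrade non-negativity to the coercivity bound, and this is where I expect the real work to lie. What is needed is the lower estimate $\Vert {}_{0}^{R}D^{\frac{s}{2}}_x u\Vert^2_{L^2(\mathbb{R})} = \int_{\mathbb{R}}|\omega|^{s}|\hat u(\omega)|^2\,\mathrm{d}\omega \ge c\,\Vert u\Vert^2_{\tilde{H}^{\frac{s}{2}}(\Omega)}$, that is, that the fractional seminorm is equivalent to the full $\tilde{H}^{\frac{s}{2}}$-norm. The matching upper bound is already supplied by Theorem~\ref{thm1}; the lower bound is a Poincar\'e-type inequality that is available precisely because elements of $\tilde{H}^{\frac{s}{2}}(\Omega)$ are supported in the bounded interval $\Omega$ and vanish at the endpoints, so the low-frequency content of $\hat u$ cannot dominate the high-frequency seminorm. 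Combining this equivalence with the identity above gives $\mathcal{A}(u,u) \ge |\cos(\tfrac{\pi s}{2})|\,c\,\Vert u\Vert^2_{\tilde{H}^{\frac{s}{2}}(\Omega)}$, which is the claimed coercivity with constant $c' = |\cos(\tfrac{\pi s}{2})|\,c$. The main obstacle is thus the careful justification of this norm equivalence for the zero-extension space $\tilde{H}^{\frac{s}{2}}(\Omega)$, rather than the symbol computation, which is routine.
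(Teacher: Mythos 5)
Your proposal is correct and follows essentially the same route as the paper: both reduce $\mathcal{A}(u,u)$ via Plancherel to $-\cos(\tfrac{\pi s}{2})\int_{\mathbb{R}}|\omega|^{s}|\hat u(\omega)|^{2}\,\mathrm{d}\omega$ (the paper writes the positive prefactor as $\cos(\tfrac{\pi(2-s)}{2})$, working with the unsplit form $-({}_{0}^{R}D^{s}_{x}u,u)$ and an Eggermont-style regularized symbol $\hat a_\varepsilon(\omega)=\omega^{2}(\varepsilon+\mathrm{i}\omega)^{s-2}$ to justify the Fourier computation), and both defer the upgrade from the seminorm $\int|\omega|^{s}|\hat u|^{2}$ to the full $\tilde H^{\frac{s}{2}}(\Omega)$-norm to exactly the step you flag, which the paper handles by citing the contradiction argument of \cite[Lemma 4.2]{jin2015}. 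Your identification of that norm equivalence as the real remaining work, with the symbol algebra being routine, matches the structure of the paper's proof.
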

	\begin{proof}
		It is easily verified that 
		\[\mathcal{A}(u,u)=-(_{0}^{R}D^s_xu,u).\]	
		Let us { consider} $ Su(x):= -_{0}^{R}D^s_xu$ and borrow the notation  $ S^\varepsilon  u $ for $ \varepsilon>0 $ from \cite{eggermont88} as follows
		\[S^\varepsilon u(x)=\frac{-1}{\Gamma(2-s)}\frac{\mathrm{d^2}}{\mathrm{d}x^2}\int_{0}^{x}(x-t)^{1-s}e^{-\varepsilon(x-t)}u(t)\mathrm{d}t, \quad x>0.\]
		Using the Plancherel theorem, we get
		\begin{equation} \label{sh}
		(S^{\varepsilon} u,u)=\int_{-\infty}^{\infty}(S^\varepsilon u\hat{)}(w) \overline{\hat{u}(w)}\mathrm{d}w,
		\end{equation}
		where the notation $\,\hat{}\,$ refers to the Fourier transform.  
		Let us introduce 
		\[
  	   \hat{a}_\varepsilon(w)= w^2 \int_{0}^{\infty}x^{1-s}
  	   e^{-(\varepsilon+\text{i} w)x}\mathrm{d}x,
		\]
		therefore, 
		$(S^\varepsilon u\hat{)}(w)=\hat{u}(w)\hat{a}_\varepsilon(w)$. Now, regarding the principal value of the power function, we write
		$\hat{a}_\varepsilon(w)=w^2(\varepsilon+\text{i} w)^{s-2}$. 
		Hence, 
		\[
		\text{Re}\hat{a}_\varepsilon(w)>\text{Re} \hat{a}_0 (w)=\cos(\dfrac{\pi(2-s)}{2})\vert w\vert^{s}.
		\]
		%	Note that  for $ 1<s<2 $, $  \re~\hat{a}_0(w)$ is positive.
		From Eq. \eqref{sh} and the above equation, we have
		\begin{equation}\label{shat}
		\begin{split}
		\text{Re}(u,S^\varepsilon u)&=\text{Re}(\int_{-\infty}^{\infty}\vert \hat{u}(w)\vert^2 \hat{a}_\varepsilon(w) \mathrm{d}w)\\
		&\geq\cos(\dfrac{\pi(2-s)}{2})\int_{-\infty}^{\infty}\vert \hat{u}(w)\vert^2 \vert w\vert^{s} \mathrm{d}w. 
		\end{split}
		\end{equation}
		From the contradiction argument investigated in \cite[Lemma 4.2]{jin2015}, we conclude the coerciveness of the operator $ \mathcal{A}. $ 
		According to this result, one can deduce the monotonicity of the operator $ \mathcal{A} $ by the Definition \ref{def2}, i.e.,
		\[\mathcal{A}(u,u-v)-\mathcal{A}(v,u-v)>0. 
		\]
	\end{proof}
	{ 
		\begin{remark}
			It should be noticed that results of Theorem \ref{thmd} are well-known \cite{jin2015}, to make the paper self-contained, we have provided another proof based on the interesting  results  for the Abel's integral operator in \cite{eggermont88}. 
	\end{remark}}
	%------------------------
	In { the} next theorems, we assert { that the results} guarantee the existence of { a} unique weak solution for Eq. \eqref{asli} along with the Riemann-Liouville derivative.
	%---------------------
	\begin{theorem}\label{thm8}
		Suppose that { the} Assumption A holds for the function $ g(x,u) $ which satisfies the global ($ \Delta_2 $)-condition. Consider the variational form \eqref{weak}; then, for every  $ g\in U^* $ and $ 1<s<2 $, Eq. \eqref{asli} has a unique weak solution. 
	\end{theorem}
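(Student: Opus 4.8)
The plan is to verify the three hypotheses of the Browder--Minty theorem (Theorem~\ref{thmm}) for the operator $\mathcal{L}$ defined implicitly by the variational form \eqref{weak}, acting on the reflexive Banach space $U$ of Definition~\ref{defv}. First I would recast the weak formulation as an operator equation $\mathcal{L}u = f$ with $\mathcal{L}\colon U \to U^*$ given by $\langle \mathcal{L}u, v\rangle = \mathcal{A}(u,v) + \mathcal{B}(u,v)$, and observe that $U$ is indeed reflexive (as the intersection of the reflexive space $\tilde{H}^{s/2}(\Omega)$ with the reflexive Musielak--Orlicz space $L_G(\Omega)$, equipped with the sum norm \eqref{normv}). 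It then suffices to establish that $\mathcal{L}$ is hemi-continuous, strictly monotone, and coercive.

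\emph{Monotonicity.} I would split $\mathcal{L}$ additively. The bilinear part $\mathcal{A}$ is monotone by Theorem~\ref{thmd}, which in fact gives the stronger coercivity estimate $\mathcal{A}(u,u) \geq c\Vert u\Vert^2_{\tilde{H}^{s/2}(\Omega)}$; linearity then yields $\mathcal{A}(u-v,u-v) \geq 0$, i.e. $\langle \mathcal{A}u - \mathcal{A}v, u-v\rangle \geq 0$. For the nonlinear part, strict monotonicity of $t \mapsto g(x,t)$ from Assumption~A gives, pointwise, $(g(x,u(x)) - g(x,v(x)))(u(x)-v(x)) \geq 0$ with strict inequality wherever $u(x)\neq v(x)$; integrating over $\Omega$ shows $\langle \mathcal{B}u - \mathcal{B}v, u-v\rangle \geq 0$. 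Adding the two contributions gives monotonicity of $\mathcal{L}$, and the strict inequality from $\mathcal{B}$ (together with the nonnegativity of the $\mathcal{A}$ term) upgrades this to strict monotonicity whenever $u \neq v$, which secures uniqueness.

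\emph{Coercivity.} This is the step I expect to require the most care, since the $U$-norm is the sum of a Sobolev seminorm and a Luxemburg norm, and both pieces must be controlled simultaneously. I would write $\langle \mathcal{L}u, u\rangle = \mathcal{A}(u,u) + \rho_G(u)$, where the second equality uses $\mathcal{B}(u,u) = \int_\Omega g(x,u)u\,\mathrm{d}x$ together with the lower bound $c\,t\,g(x,t) \leq G(x,t)$ from the $(\Delta_2)$-condition in Definition~\ref{def1}, so that $\mathcal{B}(u,u) \geq c\,\rho_G(u) \geq 0$. The Sobolev contribution is bounded below by $c\Vert u\Vert^2_{\tilde{H}^{s/2}(\Omega)}$ via Theorem~\ref{thmd}, while the growth relation \eqref{inf}, namely $\rho_G(u)/\Vert u\Vert_{G,\Omega} \to \infty$ as $\Vert u\Vert_{G,\Omega}\to\infty$, controls the Orlicz contribution. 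The task is to combine these into a single bound $\langle \mathcal{L}u,u\rangle \geq \gamma(\Vert u\Vert_U)\Vert u\Vert_U$ with $\gamma(\sigma)\to\infty$; I would argue by cases according to whether $\Vert u\Vert_{\tilde{H}^{s/2}(\Omega)}$ or $\Vert u\Vert_{G,\Omega}$ dominates the sum $\Vert u\Vert_U$, using the quadratic Sobolev lower bound in the first case and \eqref{inf} in the second.

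\emph{Hemi-continuity.} For fixed $u,v,w \in U$ I would examine the scalar map $t \mapsto \langle \mathcal{L}(u+tw), v\rangle$. The $\mathcal{A}$-part is affine in $t$, hence continuous. For $\mathcal{B}$, continuity of $t\mapsto g(x, u(x)+tw(x))$ (from the continuity in Assumption~A) combined with a dominated-convergence argument—using Lemma~\ref{lem1} to place $g(\cdot,u(\cdot)+tw(\cdot)) \in L_{\tilde{G}}(\Omega)$ and the generalized H\"older inequality to dominate the integrand by an $L^1$ bound uniformly in $t$ near any point—yields continuity of $t \mapsto \int_\Omega g(x,u+tw)\,v\,\mathrm{d}x$. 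With hemi-continuity, strict monotonicity, and coercivity all in hand, Theorem~\ref{thmm} applies directly: for every $f \in U^*$ there is a unique $u^* \in U$ solving $\mathcal{L}u^* = f$, which is precisely the unique weak solution of \eqref{asli} asserted.
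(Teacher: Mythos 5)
Your proposal is correct and follows essentially the same route as the paper: both verify hemi-continuity, (strict) monotonicity, and coercivity of $\mathcal{L}=\mathcal{A}+\mathcal{B}$ on the reflexive space $U$ and then invoke the Browder--Minty theorem, with the coercivity of $\mathcal{A}$ coming from Theorem~\ref{thmd} and the Orlicz contribution controlled via the $(\Delta_2)$-condition and the limit \eqref{inf}. Your treatment is in fact somewhat more detailed than the paper's (the case analysis for coercivity and the dominated-convergence argument for hemi-continuity are only sketched there), while the paper additionally records explicitly the boundedness estimate showing $\mathcal{L}(u,\cdot)\in U^*$; also note your displayed identity $\langle\mathcal{L}u,u\rangle=\mathcal{A}(u,u)+\rho_G(u)$ should be an inequality $\geq$.
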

	\begin{proof}
		Let  $ u\in U $ be fixed. Regarding  { the Lemma \ref{lem1}}, $g(.,u(.))\in L_{\tilde{G}}(\Omega).$ Now, using H\"{o}lder inequality, we obtain
		\begin{equation}
		\vert \mathcal{L}(u,v)\vert\leq \Vert _{0}^{R}D^\frac{s}{2}_x u\Vert_{L^2(\Omega)}\Vert_{x}^{R}D^\frac{s}{2}_1 v\Vert_{L^2(\Omega)}+2\Vert g(.,u)\Vert_{\tilde{G}} \Vert v\Vert_U.
		\end{equation}
		Applying Theorem \ref{thm1} and Lemma \ref{lem1}, the above inequality { can} be simplified as 
		\begin{equation}
		\vert \mathcal{L}(u,v)\vert\leq \big(\Vert _{0}^{R}D^\frac{s}{2}_x u\Vert_{L^2(\Omega)}+2\Vert g(.,u)\Vert_{\tilde{G}}\big) \Vert v\Vert_U,
		\end{equation}
		which means that the operator $ \mathcal{L} $ is bounded.
		Since $ \mathcal{L}(u,.) $  is linear with respect to the second variable,  $ \mathcal{L}(u,.)\in U^*$  for all $ u\in U $. 
		Due to the  { strict monotonicity property}  of $ g(x,.) $  and  Theorem \ref{thmd}, { one can conclude the monotonicity of the operator $ {\mathcal{L}} $}.
		{ With} regard to the previous theorem, we have the coercivity of the operator $ \mathcal{A} $. Under the assumption about the function $ g(x,.) $ and Eq. (\ref{inf}), we { get that}
		\[\lim\limits_{\Vert u\Vert_{G, \Omega}\rightarrow \infty} \dfrac{\rho_{G}(u)}{\Vert u\Vert_{G, \Omega}}=\infty,\]
		therefore, $ \mathcal{L} $ is coercive. 
		Here, we want to show the hemi-continuity of the nonlinear monotone operator $ \mathcal{L}. $ To this end, let $ u, w\in \mathbb{R}$ and the sequence $ t_n $  tend to zero. The { objective} is to show that $ \mathcal{L}(u+t_nw,v)$ tends to $\mathcal{L}(u,v).$
		It is an evident fact that $ (_{0}^{R}D^\frac{s}{2}_x (u+t_nw), _{x}^{R}D^\frac{s}{2}_1 v) $ converges to $ (_{0}^{R}D^\frac{s}{2}_x u,_{x}^{R}D^\frac{s}{2}_1v) $
		when 	$ t_n$ tends toward zero. Regarding the continuity of the function $ g(x,.)$, the  claim on the operator $ \mathcal{L} $ being hemi-continuous is verified. 	
		Consequently, the existence of { a} unique weak solution is proved by utilizing { the} Browder-Minty Theorem \ref{thmm}. 
	\end{proof} 
	
	We proceed { with} the discussion on the regularity of the solution.  { In order to do this}, the following theorem is stated. 
	\begin{theorem}\label{thmr1}
		Consider Eq. \eqref{asli} along with the Riemann-Liouville fractional derivative where the function $ g(x,u) $  satisfies the Lipschitz condition \eqref{lip}. Then, this equation has a solution which fulfills  the nonlinear Volterra-Fredholm integral equation of the form
		\begin{equation}
		u(x)=x^{s-1}\big(_{0}I^{s}_{x}(g(.,u(.))-f(.))\big)(1)-~_{0}I^{s}_{x}\big(g(.,u(.))-f(.)\big)(x).
		\end{equation} 
		In addition, let $ u\in U$  be a weak solution of Eq. \eqref{asli}. Then, $ u\in \tilde{U} $ where $ \tilde{U}:=\big\lbrace u \in {H}^\alpha(\Omega) \cap \tilde{H}^{\frac{s}{2}}(\Omega) \mid G(.,u(.))\in L^1(\Omega) \big\rbrace $ for $ 0\leq \alpha \leq s-\frac{1}{2}.$
	\end{theorem}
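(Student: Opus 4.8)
The plan is to split the statement into two parts: first, to convert the (weak) differential problem into an equivalent integral equation, and second, to feed that representation into the mapping properties of the fractional operators in order to bootstrap the regularity of $u$.

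For the first part, I would start from the pointwise reformulation $ {}_0^R D_x^s u = g(\cdot,u(\cdot)) - f =: h $ of \eqref{asli}. To see that the weak solution from Theorem \ref{thm8} actually satisfies this, I would test the variational identity \eqref{weak} against arbitrary $ v\in U $ and use the adjoint relation of Theorem \ref{taf}(b) (as already exploited in the proof of Theorem \ref{thmd}, where $ \mathcal{A}(u,u)=-({}_0^R D_x^s u,u) $) to transfer the half-derivatives, which gives $ ({}_0^R D_x^s u,v)=(g(\cdot,u)-f,v) $ for all $ v $, hence $ {}_0^R D_x^s u = h $. Writing the Riemann--Liouville derivative as $ {}_0^R D_x^s u = D^2\,{}_0 I_x^{2-s} u $ and invoking the semigroup property of Theorem \ref{taf}(a) together with the standard inversion identity for $ {}_0^R D_x^s $ with $ 1<s<2 $, the general solution reads
\[
u(x) = \bigl({}_0 I_x^{s} h\bigr)(x) + c_1 x^{s-1} + c_2 x^{s-2},
\]
with $ c_1,c_2 $ spanning the two-dimensional kernel of $ {}_0^R D_x^s $. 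The constants are then fixed: since $ s/2>\tfrac12 $ gives $ u\in\tilde H^{s/2}(\Omega)\hookrightarrow L^\infty(\Omega) $ while $ x^{s-2} $ is unbounded near the origin, we are forced to take $ c_2=0 $ (equivalently, this encodes $ u(0)=0 $), and the remaining condition $ u(1)=0 $ determines $ c_1 $. Substituting $ h=g(\cdot,u)-f $ back yields the claimed Volterra--Fredholm form, whose Volterra part is $ {}_0 I_x^{s}h(x) $ and whose Fredholm part is the constant $ \bigl({}_0 I_x^{s}h\bigr)(1) $ times $ x^{s-1} $.

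For the regularity part I would bootstrap on this representation. The embedding $ \tilde H^{s/2}(\Omega)\hookrightarrow L^\infty(\Omega) $ places $ u\in L^\infty $, so that, by the Lipschitz hypothesis \eqref{lip}, Lemma \ref{lem0} applied with exponent $ s/2 $ gives $ g(\cdot,u(\cdot))\in H^{s/2}(\Omega) $ and hence $ h=g(\cdot,u)-f\in L^2(\Omega) $. I would then read off the regularity of the two terms separately: by the mapping property of the Riemann--Liouville integral (Theorem \ref{taf}(d)) the Volterra term $ {}_0 I_x^{s}h $ gains $ s $ orders of smoothness and is far smoother than required, whereas by Theorem \ref{taf}(c) the Fredholm term, being a constant multiple of $ x^{s-1} $, belongs to $ H^\alpha(\Omega) $ only up to the threshold $ \alpha < s-\tfrac12 $. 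Taking the minimum of the two regularities shows $ u\in H^\alpha(\Omega) $ in this range; since $ u\in\tilde H^{s/2}(\Omega) $ and $ G(\cdot,u(\cdot))\in L^1(\Omega) $ already hold because $ u\in U $, we conclude $ u\in\tilde U $.

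The step I expect to be the main obstacle is the rigorous passage from the variational identity in $ \tilde H^{s/2}(\Omega) $ to the pointwise equation together with the elimination of the singular mode $ c_2 x^{s-2} $: because $ h $ is a priori only $ L^2 $, the boundary conditions must be interpreted through the mapping properties rather than pointwise, and I would make this legitimate by a density argument on $ \mathcal{D}(\Omega) $ combined with the continuous extension in Theorem \ref{thm1}. The endpoint $ \alpha=s-\tfrac12 $ is likewise delicate, since $ x^{s-1} $ sits exactly at the boundary of $ H^{s-1/2}(\Omega) $ and Theorem \ref{taf}(c) only furnishes the open range $ \alpha<s-\tfrac12 $; I would treat that borderline case separately using the sharp behaviour of $ x^{s-1} $ near the origin.
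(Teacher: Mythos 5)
Your proposal is correct and follows essentially the same route as the paper: the paper likewise obtains the representation $u(x)=w\,x^{s-1}-{}_{0}I^{s}_{x}\big(g(\cdot,u)-f\big)(x)$ with $w=\big({}_{0}I^{s}_{x}(g(\cdot,u)-f)\big)(1)$ (citing Diethelm's book for the inversion and the adjustment of the boundary conditions rather than deriving it from the weak form and eliminating the $x^{s-2}$ mode explicitly, as you do), and then reads off the regularity from Lemma \ref{lem0}, Theorem \ref{taf}(c) applied to $x^{s-1}$, and Theorem \ref{taf}(d) applied to the Volterra term, exactly as you describe. One remark: the paper's own proof, like yours, only establishes the open range $0\le\alpha<s-\tfrac{1}{2}$, so the closed endpoint $\alpha\le s-\tfrac{1}{2}$ appearing in the theorem statement is not actually proved there either --- your instinct to single out that borderline case as delicate is well placed.
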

	\begin{proof}
		{According to the argument  about converting the FDEs into { integral equations} in Chapter 5 of \cite{diethelm2010analysis} and by adjusting the homogeneous Dirichlet  boundary conditions, $ u(x) $ takes the following form
			\begin{equation}
			u(x)=w\,x^{s-1}-\dfrac{1}{\Gamma(s)}\int_0^x (x-y)^{s-1}\big(g(y,u(y))-f(y)\big)\mathrm{d}y,
			\end{equation} }
		where  $ w=\big(_{0}I^{s}_{x}(g(.,u(.))-f(.))\big)(1).$ Moreover, by means of Theorem \ref{taf} part ($ c $), we have $ x^{s-1}\in {H}^\alpha(\Omega),$ for $ 0\leq\alpha <s-\frac{1}{2}.$ On the other hand,   Lemma \ref{lem0} and $ u\in U $ insure that $ g(.,u(.))\in {H}^{\frac{s}{2}}(\Omega)$ which is a subset of $ L^2(\Omega)$. Hence, it { achieves} that $ g(.,u(.))-f(.)\in L^2(\Omega) $. In addition,  we conclude from part ($ d $) of Theorem \ref{taf} that  $_{0}I^{s}_{x}(g(.,u(.))-f(.))\in \tilde{H}^{s}(\Omega)$. 
		Consequently,  since $u\in\tilde{H}^{\frac{s}{2}}(\Omega) $,  one can deduce that $ u\in {H}^\alpha(\Omega) \cap \tilde{H}^{\frac{s}{2}}(\Omega) ,\,$ for $  0\leq\alpha <s-\frac{1}{2}. $ 
	\end{proof}
	\begin{remark} Due to the presence of the singular term $ x^{s-1}$, it is apparent that the best possible regularity of the solution \eqref{asli} { can} occur in ${H}^\alpha(\Omega)$. The similar argument about the regularity of the linear form of Eq. \eqref{asli} reported in Theorem 4.4 and Remark 4.5 of the interesting work \cite{jin2015} verifies the above claim. In that work, ${H}^\alpha(\Omega),~ 0\leq\alpha <s-\frac{1}{2}$ is displayed by ${H}^{s-1+\alpha}(\Omega)$, where $1-\frac{s}{2}\leq\alpha <\frac{1}{2}$, to show the presence of the singular term better. 
	\end{remark}
	%---------------------------------------------------------
		\subsection{The Caputo fractional operator}
	As  discussed in \cite{jin2015}, the difference between the variational formulation of the Caputo and the Riemann-Liouville equations is their admissible test spaces. It means that the variational formulation of Eq. \eqref{asli} along with the Caputo derivative is:
	
	{Find $u \in U$ such that 
		\begin{equation}\label{weak2}
		\begin{split}
		\mathcal{L}(u,v)&:= \mathcal{A}(u,v)+ \mathcal{B}(u,v)=\langle f,v\rangle,\quad  \,v \in V, 
		\end{split}
		\end{equation}
		where 
		\[U:=\lbrace\phi \in \tilde{H}^{\frac{s}{2}}(\Omega)\mid G(.,\phi(.))\in L^1(\Omega)\rbrace,\]
		and
		\begin{equation}\label{Vv}
		V:=\lbrace\phi \in \tilde{H}^{\frac{s}{2}}(\Omega)\mid (x^{1-s},\phi)=0 \rbrace.
		\end{equation}}
	
	In order to define { the} appropriate test space $ V $ for the Caputo case, we assume that $ \phi^{*}(x)=(1-x)^{1-s}$, which belongs to $ \tilde{H}^{\frac{s}{2}}(\Omega)$, and apparently for any  $\phi \in U$, we have $ \mathcal{A}(\phi,\phi^{*})=0.$ In the Caputo fractional derivative case, we set $V=\text{span} \big\lbrace \tilde{\phi}_i(x)=\phi_i(x)-\gamma_{i}(1-x)^{s-1}~|~ i=0, \dots, N\big\rbrace$\\ where
	\begin{equation}\label{gammaha}
	\gamma_{i}=\dfrac{(x^{1-s},\phi_i(x))}{(x^{1-s},(1-x)^{s-1})},
	\end{equation}
	and $ \phi_i\in U.$ We { will} elucidate the above argument in the next section.
	Note that both Theorems \ref{thmd} and \ref{thm8} are valid for the operators including Caputo derivative which have the same variational formulations for the Riemann-Liouville counterpart. 
	Now, we discuss the regularity of the solution by the following theorem.
	\begin{theorem}\label{thmr2}
		Let us consider Eq. \eqref{asli}  with { the} Caputo fractional derivative for $ 1<s<2 $ in which the function { $ g(x,u) $}  satisfies the Lipschitz condition \eqref{lip} and $ f\in {H}^\alpha(\Omega) $ so that $ \alpha + s \in (\frac{3}{2},2) $ and $ \alpha\in [0,\frac{1}{2}) $. Then, this equation has a solution which fulfills the following nonlinear  Volterra-Fredholm integral equation
		\begin{equation}
		u(x)=\,x \,_{0}I^{s}_{x}\big(g(.,u(.))-f(.)\big)(1)-~ _{0}I^{s}_{x}\big(g(.,u(.))-f(.)\big)(x).
		\end{equation} 
		In addition, let $ u(x)\in U$  be a weak solution of Eq. \eqref{asli}. Then $ u\in \tilde{U} $ where 
		{ $ \tilde{U}:=\Big\{  \phi(x) \in {H}^{\alpha+s}(\Omega) \cap \tilde{H}^{\frac{s}{2}}(\Omega) \mid  G(.,u(.))\in L^1(\Omega) \Big\} $.} 
	\end{theorem}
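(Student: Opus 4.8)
The plan is to mirror the proof of Theorem~\ref{thmr1}, exploiting the fact that the Caputo formulation replaces the singular boundary factor $x^{s-1}$ by the smooth monomial $x$. First I would convert the Caputo boundary value problem (whose weak solution exists by Theorem~\ref{thm8}) into an integral equation. For $1<s<2$ (so $n=2$), applying the left fractional integral ${}_{0}I^{s}_{x}$ to $-{}^{C}_{0}D^{s}_x u = g(\cdot,u(\cdot))-f(\cdot)$ and using the semi-group property of Theorem~\ref{taf}(a) to compose the fractional integrals reduces ${}_{0}I^{s}_{x}\,{}^{C}_{0}D^{s}_x u$ to ${}_{0}I^{2}_{x}D^{2}u = u(x)-u(0)-u'(0)\,x$. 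Hence $u(x) = u'(0)\,x + {}_{0}I^{s}_{x}\big(f(\cdot)-g(\cdot,u(\cdot))\big)(x)$, and imposing $u(0)=0$ together with $u(1)=0$ fixes $u'(0)={}_{0}I^{s}_{x}\big(g(\cdot,u(\cdot))-f(\cdot)\big)(1)$, which is exactly the stated Volterra--Fredholm equation. This adapts the conversion recipe of Chapter~5 of \cite{diethelm2010analysis} to the Caputo operator and explains why the smooth factor $x$ appears in place of $x^{s-1}$.

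For the regularity assertion I would bootstrap on this representation. Since $u\in U\subset\tilde{H}^{s/2}(\Omega)$ with $s/2\in(\tfrac12,1)$, the one-dimensional Sobolev embedding $\tilde{H}^{s/2}(\Omega)\hookrightarrow L^{\infty}(\Omega)$ gives $u\in L^{\infty}(\Omega)$; then Lemma~\ref{lem0}, applied with its smoothness exponent taken to be $s/2$, together with the Lipschitz hypothesis~\eqref{lip}, yields $g(\cdot,u(\cdot))\in H^{s/2}(\Omega)$. Because $\alpha\in[0,\tfrac12)$ we have $\alpha<s/2$, so $g(\cdot,u(\cdot))-f(\cdot)\in H^{\alpha}(\Omega)$; and since $\alpha<\tfrac12$ the zero extension is automatic, whence $g(\cdot,u(\cdot))-f(\cdot)\in\tilde{H}^{\alpha}(\Omega)$. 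Invoking part~(d) of Theorem~\ref{taf}, the operator ${}_{0}I^{s}_{x}$ maps $\tilde{H}^{\alpha}(\Omega)$ boundedly into $\tilde{H}^{\alpha+s}(\Omega)\subset H^{\alpha+s}(\Omega)$, while the first term is a constant multiple of $x$ and lies in $H^{\alpha+s}(\Omega)$ trivially. Combining these, $u\in H^{\alpha+s}(\Omega)$, and since $u\in\tilde{H}^{s/2}(\Omega)$ with $G(\cdot,u(\cdot))\in L^{1}(\Omega)$, we conclude $u\in\tilde{U}$.

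The hypotheses $\alpha+s\in(\tfrac32,2)$ and $\alpha\in[0,\tfrac12)$ enter precisely to keep this chain valid: $\alpha<\tfrac12$ secures $\tilde{H}^{\alpha}(\Omega)=H^{\alpha}(\Omega)$ so that part~(d) applies to $g(\cdot,u(\cdot))-f(\cdot)$; $\alpha+s<2$ keeps the target smoothness below $H^{2}(\Omega)$, where the single Dirichlet condition suffices and no derivative boundary compatibility is forced on ${}_{0}I^{s}_{x}(g-f)$; and $\alpha+s>\tfrac32$ guarantees $u\in C^{1}(\bar{\Omega})$ a posteriori, so that the value $u'(0)$ used in the boundary-matching step is meaningful. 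I expect the main obstacle to be not the final mapping estimate, which is a direct citation of Theorem~\ref{taf}(d), but the rigorous justification of the Caputo composition identity and of the boundary matching when the weak solution is known only in $\tilde{H}^{s/2}(\Omega)$; care is needed so that the trace $u'(0)$ is well defined, presumably by first establishing $H^{\alpha+s}$ regularity before evaluating the derivative at the endpoint.
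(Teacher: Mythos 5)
Your proposal is correct and follows essentially the same route as the paper: the paper obtains the representation $u(x)=w\,x-{}_{0}I^{s}_{x}\big(g(\cdot,u)-f\big)(x)$ by citing \cite[Theorem 6.43]{diethelm2010analysis} (you simply unpack that citation via the semigroup identity and boundary matching), and then runs the identical regularity chain — Lemma~\ref{lem0} gives $g(\cdot,u(\cdot))\in H^{s/2}(\Omega)\subset H^{\alpha}(\Omega)$, Theorem~\ref{taf}(d) lifts ${}_{0}I^{s}_{x}\big(g(\cdot,u)-f\big)$ into $H^{\alpha+s}(\Omega)$, and the smooth term $w\,x$ is harmless. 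Your added remarks (the $L^{\infty}$ embedding needed to invoke Lemma~\ref{lem0}, and the identification $\tilde{H}^{\alpha}(\Omega)=H^{\alpha}(\Omega)$ for $\alpha<\tfrac12$ so that part~(d) applies) are sound and in fact make explicit two details the paper passes over.
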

	\begin{proof}
		According to \cite[Theorem 6. 43]{diethelm2010analysis}, $ u(x) $ has the following form
		\begin{equation}
		u(x)=w\,x-\dfrac{1}{\Gamma(s)}\int_0^x (x-y)^{s-1}\big(g(y,u(y))-f(y)\big)\mathrm{d}y,
		\end{equation} 
		where $ w=\big(_{0}I^{s}_{x}(g(.,u(.))-f(.))\big)(1)$ is determined by adjusting the boundary condition.  
		On the other hand,   Lemma \ref{lem0} and $ u\in U $ imply that $ g(.,u(.))\in {H}^{\frac{s}{2}}(\Omega)$ which is a subset of  ${H}^{\alpha}(\Omega)$. Therefore, $ g(.,u(.))-f(.)\in  {H}^{\alpha}(\Omega) $.  Hence, by part ($ d $) of Theorem \ref{taf}, we have  $_{0}I^{s}_{x}(g(.,u(.))-f(.))\in {H}^{\alpha+s}(\Omega)$. Moreover, by means of Theorem \ref{taf} part ($ c $), we have $ x\in \tilde{H}^\beta(\Omega), $ for $ 0\leq\beta <\frac{3}{2}.$  Consequently, from the inclusion argument and $u\in\tilde{H}^{\frac{s}{2}} (\Omega)$, we can conclude that  $ u\in {H}^{\alpha+s}(\Omega) \cap \tilde{H}^{\frac{s}{2}}(\Omega)$ where $ 0\leq\alpha <\frac{1}{2}. $ 
		
	\end{proof}
	\begin{remark}
		As observed in Theorems \ref{thmr1} and \ref{thmr2}, owing to the existence of  the  intrinsic singular term $ x^{s-1} $ in the solution representation, 
		the solution of {the} differential equation with { the} Riemann-Liouville derivative has less regularity in comparison with the Caputo fractional counterpart. In fact, the best possible regularity in the Riemann-Liouville fractional derivative case belongs to  $\tilde{H}^{s-1+\alpha}(\Omega)$. It is worthy to note that the  superiority for the Caputo fractional derivative case comes from the fact that the function under the Caputo derivative  is supposed to  be twice differentiable.
	\end{remark}
	%------------------------------------------------
	{  In the following remark, the stability of the variational formulations is shown. 
		
		\begin{remark}\label{stab}
			If we assume that $f \in \tilde{ H}^{-\frac{s}{2}}(\Omega) \hookrightarrow U^*$ and take $u=v$ in the relation \eqref{weak}, then by using the results of Theorem \eqref{thmd} and the relation  $\mathcal{B}(u,u) \geq 0$, we see that there is a constant $c>0$ such that 
			$c \Vert u\Vert^2_{\tilde{ H}^{\frac{s}{2}}(\Omega)} \leq \mathcal{A}(u,u) + \mathcal{B}(u,u)$. 		
			Note that $ \langle f,u \rangle_{U^*, U} =\langle f,u \rangle_{\tilde{ H}^{-\frac{s}{2}}, \tilde{ H}^{\frac{s}{2}}} $, so we get that  
			\[
			c \Vert u\Vert^2_{\tilde{ H}^{\frac{s}{2}}(\Omega)}	\leq  \vert \langle f,u \rangle \vert\leq \Vert f\Vert_{\tilde{ H}^{-\frac{s}{2}}(\Omega)} \Vert u \Vert_{\tilde{ H}^{\frac{s}{2}}(\Omega)},
			\]
			which means that there is a constant $C>0$ such that $\Vert u\Vert^2_{\tilde{ H}^{\frac{s}{2}}(\Omega)} \leq C \Vert f\Vert_{\tilde{ H}^{-\frac{s}{2}}(\Omega)}$. 
			
	\end{remark}}
	
	%--------------------------------------------------------
	\section{Finite element approximation}\label{fem}
	{ To find an approximate solution,} we  discretize the continuous problem \eqref{weak} by a Galerkin  finite element method. { In order to do this}, a piecewise polynomial finite element method is introduced  over the interval $ \Omega=[0, 1]$. 
	Let us  define $\mathbb{P}_{r}(\Omega)$ as the space of univariate polynomials of the degree less than or equal to $r$, for positive integer $r$ { and } $\chi_{h}$ be a uniform mesh partition on $\Omega$, given by 
	\begin{equation}\label{mesh}
	0=x_{0}<x_{1}<\dots<x_{N-1}<x_{N}=1, \quad N \in \mathbb{N},
	\end{equation}
	with fixed mesh size $ h_{i}= x_{i}-x_{i-1} $. The set $\chi_{h}$ induces a mesh $\mathcal{T}_{h}=\{\tau_{i} | 1\leq i \leq N\}$ on $\Omega$, where  
	$\tau_{i}= [x_{i-1}, x_{i}]$. The length of a subinterval $\tau\in\mathcal{T}_{h}$ is denoted by $h_{\tau}$ and the maximal mesh width by $h:=\max\left\{  h_{\tau}:\tau\in\mathcal{T}_{h}\right\}  $.
	We choose standard continuous and  piecewise polynomial function space of { the} degree $r\in\mathbb{N}$ on
	$[0,1]$ { defined} by
	\begin{equation}
	S_{\mathcal{T}}^{r}(\Omega):=
	\{v\in C(\Omega):\left.  v\right\vert _{\tau}\in\mathbb{P}_{r}\left(
	\tau\right)  ,\forall\tau\in\mathcal{T}\}.
	\end{equation}
	The nodal points are given by%
	\[
	\mathcal{N}_{r}:=
	\left\{  \xi_{i,j}:=x_{i-1}+j\frac{x_{i}-x_{i-1}}{r}\quad1\leq i\leq N\text{,
	}0\leq j\leq r-1\right\}  \cup\left\{  1\right\}.
	\]We choose the usual standard Lagrange basis functions
	$b_{i,j}^{(r)}$ of $S_{\mathcal{T}}^{r}(\Omega)$. 
	Now, with these piecewise functions, one can define the discrete admissible space  which is a subspace of $S^{r}_{\mathcal{T}}(\Omega)\bigcap H^{1}_{0}(\Omega)$ denoted by $A_{h}$. 
	Particularly, we focus on the linear elements in the numerical experiments. Let $\mathcal{I}_{h}$ be the Lagrange interpolation operator mapping into $A_{h}$.
	We denote the finite element test and trial spaces $U_{h}$ for the Riemann-Liouville fractional derivative with $A_{h}$ which is described above. In order to investigate the Caputo fractional derivative case, we consider  finite dimensional set $U_{h}= A_{h}$ as the trial space.  
	In addition, to construct a suitable  test  space  $V_{h}$, let  $V_{h}=\text{span} \Big\lbrace \tilde{\phi}_i(x) ~|~ i=0, 1,  \dots, N \Big\rbrace$ where
	\[ \tilde{\phi}_i(x)=\phi_i(x)-\gamma_{i}(1-x)^{s-1},\]
	{ and} $\gamma_{i}$ is { given} by \eqref{gammaha}.
	{Finally, the discrete variational formulation released from \eqref{weak} and \eqref{weak2} is:
		
		Find $u_{h} \in A_{h}$ such that 
		\begin{equation}\label{femd}
		\mathcal{L}(u_{h},v_{h})=F(v_{h}), \quad \forall v_{h}\in V_{h}.
		\end{equation}
		We notice that in the approximation procedure, one can use the property ($f$) of Theorem \ref{taf} which means that for the computation of $_0^R D_{x}^{\frac{s}{2}}u_{h}$, one can utilize the relation
		\begin{equation}\label{37}
		\begin{split}
		_0^R D_{x}^{\frac{s}{2}} \phi_{i} = _0^R I_{x}^{1-\frac{s}{2}} \phi_{i}=&\frac{1}{\Gamma(1-\frac{s}{2})}\int_{0}^{x}(x-t)^{-\frac{s}{2}}\phi_{i}'(t)\mathrm{d}t\\
		=&\frac{1}{\Gamma(1-\frac{s}{2})}\int_{0}^{x}(x-t)^{-\frac{s}{2}}(\frac{\chi_{[x_{i-1},x_{i}]}}{h_{i}}-\frac{\chi_{[x_{i},x_{i+1}]}}{h_{i+1}} )\mathrm{d}t\\
		=&\frac{1}{\Gamma(1-\frac{s}{2})}\Big[h^{-1}_{i}\big((x-x_{i-1})_{+}^{1-\frac{s}{2}}  -(x-x_{i})_{+}^{1-\frac{s}{2}} \big)\\
		& - h^{-1}_{i+1}\big((x-x_{i})_{+}^{1-\frac{s}{2}} 
		-(x-x_{i+1})_{+}^{1-\frac{s}{2}} \big)\Big],
		\end{split}
		\end{equation}
		where $a_{+}=\max\{a,0\}$, and analogously for $_x^R D_{1}^{\frac{s}{2}}u $, we apply
		\begin{equation}\label{38}
		\begin{split}
		_x^R D_{1}^{\frac{s}{2}} \phi_{i} =- _x^R I_{1}^{1-\frac{s}{2}} \phi_{i}=&- \frac{1}{\Gamma(1-\frac{s}{2})}\int_{x}^{1}(x-t)^{-\frac{s}{2}}\phi_{i}'(t)\mathrm{d}t\\
		=&\frac{1}{\Gamma(1-\frac{s}{2})}\int_{x}^{1}(x-t)^{-\frac{s}{2}}(\frac{\chi_{[x_{i-1},x_{i}]}}{h_{i}}-\frac{\chi_{[x_{i},x_{i+1}]}}{h_{i+1}} )\mathrm{d}t\\
		=&\frac{1}{\Gamma(1-\frac{s}{2})}\Big[h^{-1}_{i}\big((x_{i}-x)_{+}^{1-\frac{s}{2}}  -(x_{i-1}-x)_{+}^{1-\frac{s}{2}} \big)\\
		& - h^{-1}_{i+1}\big((x_{i+1}-x)_{+}^{1-\frac{s}{2}}
		  -(x_{i}-x)_{+}^{1-\frac{s}{2}} \big)\Big].
		\end{split}
		\end{equation}
		Therefore, the term $\mathcal{A}(\phi_{i}, \phi_{j})= \big( _0^R D_{x}^{\frac{s}{2}}\phi_{i},\,  _x^R D_{1}^{\frac{s}{2}}\phi_{j} \big)$ can be derived by the above arguments for the Riemann-Liouville derivative case. For the Caputo fractional derivative, we have $\mathcal{A}(\phi_{i},\tilde{ \phi_{j}})= \big( _0^R D_{x}^{\frac{s}{2}}\phi_{i},\,  _x^R D_{1}^{\frac{s}{2}}\tilde{ \phi_{j}} \big)$ which can be simplified as 
		\[
		\big( _0^R D_{x}^{\frac{s}{2}}\phi_{i}\, ,\,  _x^R D_{1}^{\frac{s}{2}}\phi_{j} \big) - \gamma_{j} \big( _0^R D_{x}^{\frac{s}{2}}\phi_{i}\, ,\,  _x^R D_{1}^{\frac{s}{2}}(1-x)^{s-1} \big).
		\]
		The first term { can} be computed using  the relations \eqref{37} and \eqref{38} and the second term is disappeared,  because 
		\begin{equation}
		\begin{split}
		\big(  _0^R D_{x}^{\frac{s}{2}}\phi_{i}\, ,\, _x^R D_{1}^{\frac{s}{2}}(1-x)^{s-1}\big)& = - \big(_0 I_{x}^{1-\frac{s}{2}}\phi'_{i}\, , \, _x^R D_{1}^{\frac{s}{2}}(1-x)^{s-1}\big)\\
		&= c_{\alpha} \big(\phi'_{i}\, , \, _0 I_{x}^{1-\frac{s}{2}}(1-x)^{\frac{s}{2}-1}\big)\\
		&= c_{\alpha}\big(\phi'_{i}\, , \, 1\big)\\
		& =0,
		\end{split} 
		\end{equation}
		where $c_{\alpha}$ is a constant depending on $\alpha$. 
		%-------------------------------------------------------------
		\subsection{Convergence analysis}\label{secconv}
		This section is devoted to the study of the { approximate solution achieved  in the} previous section.  For this end, we consider the existence and uniqueness issue for the discrete equation. In addition, we find an appropriate priori error bound.
		
		\begin{theorem}
			The discrete problem  \eqref{femd} has a unique solution.
		\end{theorem}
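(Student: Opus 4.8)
The plan is to observe that \eqref{femd} is a finite-dimensional problem and to reproduce, at the discrete level, the Browder--Minty argument used for the continuous problem in Theorem \ref{thm8}. Writing $u_h=\sum_j c_j\phi_j\in A_h$, solving \eqref{femd} is equivalent to finding a zero of the nonlinear map $\Phi:\mathbb{R}^{N+1}\to\mathbb{R}^{N+1}$ with components $\Phi_i(c)=\mathcal{L}(u_h,\psi_i)-F(\psi_i)$, where $\psi_i=\phi_i$ in the Riemann--Liouville case and $\psi_i=\tilde\phi_i$ in the Caputo case. Since the trial space $A_h$ is finite-dimensional, hence a reflexive Banach space, the hypotheses of Theorem \ref{thmm} only have to be checked for the restriction of $\mathcal{L}$ to $A_h$.

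First I would verify the Browder--Minty hypotheses for the restricted operator. Boundedness and hemi-continuity descend directly from the estimates in the proof of Theorem \ref{thm8}, because $A_h\subset U$ and the continuity of $t\mapsto\mathcal{L}(u+tw,v)$ was shown there to follow from the continuity of $g(x,\cdot)$ and the linearity of $\mathcal{A}$ in each argument. Monotonicity of $\mathcal{L}=\mathcal{A}+\mathcal{B}$ likewise descends to $A_h$: $\mathcal{A}$ is monotone by Theorem \ref{thmd}, and $\mathcal{B}$ is monotone since $g(x,\cdot)$ is strictly monotone under Assumption A. For coercivity I would combine $\mathcal{A}(u_h,u_h)\geq c\Vert u_h\Vert^2_{\tilde H^{s/2}(\Omega)}$ from Theorem \ref{thmd} with $\mathcal{B}(u_h,u_h)=(g(\cdot,u_h),u_h)\geq 0$ (the sign following from $g(x,\cdot)$ being odd and monotone through the origin) and the growth relation \eqref{inf}, which reproduces the coercivity of $\mathcal{L}$ on $A_h$. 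Existence of a discrete solution then follows from Theorem \ref{thmm} applied in finite dimensions.

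For uniqueness I would note that $\mathcal{L}$ is in fact strictly monotone: being linear and coercive, $\mathcal{A}$ satisfies $\mathcal{A}(u_h-w_h,u_h-w_h)\geq c\Vert u_h-w_h\Vert^2>0$ for $u_h\neq w_h$, while $\mathcal{B}$ is monotone, so $\langle\mathcal{L}u_h-\mathcal{L}w_h,u_h-w_h\rangle>0$ whenever $u_h\neq w_h$. Two discrete solutions would therefore coincide, giving the uniqueness claimed in the theorem.

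The main obstacle is the Caputo case, where \eqref{femd} is a Petrov--Galerkin scheme with trial space $A_h$ and a different test space $V_h=\mathrm{span}\{\tilde\phi_i\}$, so Theorem \ref{thmm}, stated for a single space, does not apply verbatim. To handle this I would use the linear bijection $J:A_h\to V_h$, $J\phi_i=\tilde\phi_i$, and recast \eqref{femd} as: find $u_h\in A_h$ with $\mathcal{L}(u_h,Jw_h)=F(Jw_h)$ for all $w_h\in A_h$, a problem posed on the single space $A_h$. The key simplification is the orthogonality identity $\mathcal{A}(\phi_i,\tilde\phi_j)=\mathcal{A}(\phi_i,\phi_j)$ established above (since $\mathcal{A}(\phi_i,(1-x)^{s-1})=0$), so that $\mathcal{A}(u_h,Ju_h)=\mathcal{A}(u_h,u_h)\geq c\Vert u_h\Vert^2$ and the coercive part survives the change of test space. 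The delicate point is the nonlinear cross term $\mathcal{B}(u_h,Ju_h)=(g(\cdot,u_h),u_h)-\beta\,(g(\cdot,u_h),(1-x)^{s-1})$ with $\beta=(x^{1-s},u_h)/(x^{1-s},(1-x)^{s-1})$, whose second summand is sign-indefinite; controlling it, either by absorbing it into the coercive bound for small $h$ or through a discrete inf--sup estimate for the pair $(A_h,V_h)$, is where the genuine work lies.
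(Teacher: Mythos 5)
Your proposal follows essentially the same route as the paper: existence via the Browder--Minty theorem restricted to the finite-dimensional space (the paper literally says the existence "can be verified through the Browder-Minty theorem with the same argument pursued in Section \ref{var}"), and uniqueness by subtracting the two discrete equations, invoking the coercivity of $\mathcal{A}$ from Theorem \ref{thmd} and the monotonicity of $g(x,\cdot)$ — which is exactly your strict-monotonicity argument for $\mathcal{L}=\mathcal{A}+\mathcal{B}$ written out componentwise.

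One remark on the obstacle you flag at the end. Your concern about the Caputo case is well founded: there \eqref{femd} is a Petrov--Galerkin scheme with $U_h=A_h\neq V_h$, so neither Theorem \ref{thmm} (stated for a single space) nor the uniqueness step applies verbatim, since testing with $v_h=u_1-u_2$ requires $u_1-u_2\in V_h$. You should know, however, that the paper's own proof does not resolve this either — it simply sets $v=u_1-u_2$ without comment, which is only legitimate when test and trial spaces coincide, i.e.\ in the Riemann--Liouville case. So the sign-indefinite cross term $\mathcal{B}(u_h,Ju_h)$ that you identify as "where the genuine work lies" is left undone in the paper as well; you have not missed an idea the paper supplies, you have located a gap the paper shares. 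For the Riemann--Liouville case your argument is complete and matches the paper's.
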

		\begin{proof}
			The existence of the discrete solution { can be verified through} the Browder-Minty theorem with the same argument pursued in Section \ref{var}. For the uniqueness  issue, 
			let $u_1$ and $ u_2 $ be finite element solutions of \eqref{weak}. Hence, 
			\begin{equation}\label{as}
			\begin{split}
			0= \mathcal{L}(u_1,v_h)- \mathcal{L}(u_2,v_h)&=\mathcal{A}(u_1,v_h)-\mathcal{A}(u_2,v_h)+\mathcal{B}(u_1,v_h)-\mathcal{B}(u_2,v_h)\\
			=&-\int_{\Omega}\,_{0}^{R}D_{x}^{\frac{s}{2}} (u_1-u_2)(x)\,_{x}^{R}D_{1}^{\frac{s}{2}}v_h(x)\mathrm{d}x\\
			&+\int_{\Omega} \big(g(x,u_1(x))-g(x,u_2(x))\big)v_{h}(x)\mathrm{d}x.
			\end{split}
			\end{equation}
			Since the operator $ \mathcal{A}(u,v) $ is coercive,  for $ v=u_1-u_2 $, we get that
			\[- \int_{\Omega}\, _{0}^{R}D_{x}^{\frac{s}{2}} (u_1-u_2)(x) \, _{x}^{R}D_{1}^{\frac{s}{2}}(u_1-u_2)(x)\mathrm{d}x\geq c\Vert u_1-u_2 \Vert^2_{\tilde{H}^{\frac{s}{2}}(\Omega)}.
			\]
			Using { the} above equation and  Eq. \eqref{as}, one can conclude that
			\begin{align*}
			c\Vert u_1-u_2 \Vert^2_{\tilde{H}^{\frac{s}{2}}(\Omega)}&+\int_{\Omega} \big(g(x,u_1(x))-g(x,u_2(x))\big)(u_1(x)-u_2(x))\mathrm{d}x\\
			&\leq - \int_{\Omega}\, _{0}^{R}D_{x}^{\frac{s}{2}} (u_1-u_2)(x) \, _{x}^{R}D_{1}^{\frac{s}{2}}(u_1-u_2)(x)\mathrm{d}x\\
			&+\int_{\Omega} (g(x,u_1(x))-g(x,u_2(x)))(u_1(x)-u_2(x))\mathrm{d}x\\
			&= 0.
			\end{align*}
			Since $ g(x,u) $ is a monotone function with respect to the second variable, thereby the above inequality, we conclude the uniqueness of the approximate solution.
		\end{proof}
		%---------------------------------------
		\begin{lemma}\label{lemcars}
			Let $ \mathcal{T}_h $ be a uniform mesh on $ \Omega. $ For real numbers $ s, \, m $ with $ m\geq \frac{s}{2} $, and also $S_\mathcal{T}^r (\Omega)$ with an integer $ r\geq 0 $, we define $ \hat{r}=\min\lbrace r+1,m\rbrace-\frac{s}{2} $. Then{ ,} there is a constant $ c>0 $ depending on $ s, ~m ,~ r $ and $ \mathcal{T}_h $ 
			such that
			\begin{equation}\label{intererror}
			\min\limits_{v_h \in S_\mathcal{T}^r(\Omega)} \Vert u-v_h \Vert_{H^{\frac{s}{2}}(\Omega)}\leq ch^{\hat{r}}\Vert u\Vert_{H^m(\Omega)},
			\end{equation}
			for all $ u\in H^{\frac{s}{2}}(\Omega)\cap H^m(\Omega).$  Particularly, for $ r=1 $ and $\phi \in H^{\frac{s}{2}}(\Omega)\cap H^\gamma(\Omega) $ where $ \gamma=\min \lbrace 2,m  \rbrace$ and $ \hat{r}=\gamma-\frac{s}{2}$, one can deduce that
			\begin{equation}\label{wd}
			\min\limits_{v_h \in U_h} \Vert \phi-v_h \Vert_{H^{\frac{s}{2}}(\Omega)}\leq ch^{\gamma-\frac{s}{2}}\Vert \phi \Vert_{H^{\gamma}(\Omega)}.
			\end{equation}
			Moreover, if $ \phi\in H^{\gamma}(\Omega)\cap V$ where $ V $ is defined in \eqref{Vv}, the following relation holds
			\begin{equation}\label{ws}
			\min\limits_{v_h \in V_h} \Vert \phi-v_h \Vert_{H^{\frac{s}{2}}(\Omega)}\leq ch^{\gamma-\frac{s}{2}}\Vert \phi\Vert_{H^\gamma(\Omega)},
			\end{equation}
		\end{lemma}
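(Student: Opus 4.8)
The plan is to prove the three estimates in order of increasing difficulty: \eqref{intererror} by classical interpolation theory, \eqref{wd} as the special case $r=1$, and finally \eqref{ws} for the nonstandard test space by a rank-one correction argument. First I would establish \eqref{intererror}. Since $s\in(1,2)$ we have $\tfrac{s}{2}\in(\tfrac12,1)$, so $H^{s/2}(\Omega)$ is an interpolation space between $H^0(\Omega)=L^2(\Omega)$ and $H^1(\Omega)$. The standard route is to take the Lagrange interpolant $\mathcal{I}_h u$ and combine the two integer-order local estimates obtained from the Bramble--Hilbert lemma and a scaling argument on each element $\tau\in\mathcal{T}_h$, namely $\Vert u-\mathcal{I}_h u\Vert_{H^0(\tau)}\le c\,h_\tau^{\,\ell}\Vert u\Vert_{H^\ell(\tau)}$ and $\Vert u-\mathcal{I}_h u\Vert_{H^1(\tau)}\le c\,h_\tau^{\,\ell-1}\Vert u\Vert_{H^\ell(\tau)}$ with $\ell=\min\{r+1,m\}$, and then interpolate these at the fractional index $\tfrac{s}{2}$. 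Summing the local contributions over the uniform mesh and using $\min_{v_h}\Vert u-v_h\Vert\le\Vert u-\mathcal{I}_h u\Vert$ gives \eqref{intererror} with $\hat r=\ell-\tfrac s2$.

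Estimate \eqref{wd} is the specialization $r=1$: here $\ell=\min\{2,m\}=\gamma$, and since $\phi\in\tilde H^{s/2}(\Omega)$ with $\tfrac s2>\tfrac12$ the trace is well defined and $\phi(0)=\phi(1)=0$, so its Lagrange interpolant lies in $A_h=U_h$ and the minimum over $U_h$ inherits the bound $ch^{\gamma-\frac s2}\Vert\phi\Vert_{H^\gamma(\Omega)}$.

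The substantive step is \eqref{ws}, where $V_h$ is not a piecewise-polynomial space. First I note that each $\tilde\phi_i$ satisfies $(x^{1-s},\tilde\phi_i)=0$ by the very choice of $\gamma_i$ in \eqref{gammaha}, hence $V_h\subset V$. Given $\phi\in H^\gamma(\Omega)\cap V$, let $\chi_h=\sum_i c_i\phi_i\in U_h$ be the best $H^{s/2}$-approximant from \eqref{wd}, and define the corrected function
\[
v_h:=\chi_h-\Big(\textstyle\sum_i c_i\gamma_i\Big)(1-x)^{s-1}=\sum_i c_i\tilde\phi_i\in V_h .
\]
Since $\sum_i c_i\gamma_i=(x^{1-s},\chi_h)\big/(x^{1-s},(1-x)^{s-1})$ by \eqref{gammaha}, the triangle inequality yields
\[
\Vert\phi-v_h\Vert_{H^{\frac s2}(\Omega)}\le\Vert\phi-\chi_h\Vert_{H^{\frac s2}(\Omega)}+\frac{\vert(x^{1-s},\chi_h)\vert}{\vert(x^{1-s},(1-x)^{s-1})\vert}\,\big\Vert(1-x)^{s-1}\big\Vert_{H^{\frac s2}(\Omega)} .
\]
The denominator is a fixed positive constant and $(1-x)^{s-1}\in H^{s/2}(\Omega)$ by Theorem~\ref{taf}(c), so it remains to bound the numerator, and because $\phi\in V$ we have $(x^{1-s},\phi)=0$, whence $(x^{1-s},\chi_h)=(x^{1-s},\chi_h-\phi)$.

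The crux, which I expect to be the main obstacle, is to show that $\psi\mapsto(x^{1-s},\psi)$ is bounded on $\tilde H^{s/2}(\Omega)$, since for $s\ge\tfrac32$ the weight $x^{1-s}$ is not in $L^2(\Omega)$ and Cauchy--Schwarz is unavailable. I would resolve this through the boundary behaviour of $\tilde H^{s/2}$-functions: as $\tfrac s2-\tfrac12\in(0,\tfrac12)$, the embedding $H^{s/2}(\Omega)\hookrightarrow C^{0,\frac s2-\frac12}(\Omega)$ together with $\psi(0)=0$ gives $\vert\psi(x)\vert\le c\,\Vert\psi\Vert_{H^{s/2}(\Omega)}\,x^{\frac s2-\frac12}$, so that
\[
\vert(x^{1-s},\psi)\vert\le c\,\Vert\psi\Vert_{H^{\frac s2}(\Omega)}\int_0^1 x^{1-s}x^{\frac s2-\frac12}\,\mathrm dx=c\,\Vert\psi\Vert_{H^{\frac s2}(\Omega)}\int_0^1 x^{\frac12-\frac s2}\,\mathrm dx,
\]
where the last integral converges for all $s<3$. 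Applying this with $\psi=\chi_h-\phi$ and invoking \eqref{wd} gives $\vert(x^{1-s},\chi_h)\vert\le ch^{\gamma-\frac s2}\Vert\phi\Vert_{H^\gamma(\Omega)}$. Substituting back, both terms on the right-hand side are $O\!\big(h^{\gamma-\frac s2}\Vert\phi\Vert_{H^\gamma(\Omega)}\big)$, which proves \eqref{ws} and completes the lemma.
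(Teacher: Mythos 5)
Your proposal is correct, and for the first two estimates it coincides with the paper's argument: the paper likewise reduces \eqref{intererror} and \eqref{wd} to the quasi-optimality bound $\inf_{v_h}\Vert u-v_h\Vert_{H^{\alpha}}\le\Vert u-\mathcal{I}_h u\Vert_{H^{\alpha}}$ combined with the integer-order Lagrange interpolation estimates and interpolation of Sobolev scales, and then simply refers to \cite{carstensen, jin2015} for the details --- including for \eqref{ws}. Where you genuinely add content is in \eqref{ws}: the paper gives no argument at all for the non-polynomial test space $V_h$, whereas you supply a complete one via the rank-one correction $v_h=\chi_h-\big((x^{1-s},\chi_h)/(x^{1-s},(1-x)^{s-1})\big)(1-x)^{s-1}$. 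The two ingredients this requires both check out: $(1-x)^{s-1}\in H^{\frac{s}{2}}(\Omega)$ because $\frac{s}{2}<s-\frac12$ for $s>1$ (Theorem \ref{taf}, part (c)), and the functional $\psi\mapsto(x^{1-s},\psi)$ is bounded on $\tilde{H}^{\frac{s}{2}}(\Omega)$ by the Morrey-type embedding $H^{\frac{s}{2}}(\Omega)\hookrightarrow C^{0,\frac{s}{2}-\frac12}(\overline{\Omega})$ together with $\psi(0)=0$, since $\int_0^1 x^{\frac12-\frac{s}{2}}\,\mathrm{d}x<\infty$ for $s<3$; combined with $(x^{1-s},\phi)=0$ for $\phi\in V$, this turns the size of the correction into another $O(h^{\gamma-\frac{s}{2}}\Vert\phi\Vert_{H^{\gamma}(\Omega)})$ term, as needed. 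This is essentially the construction carried out in \cite{jin2015} for the linear Caputo problem, so your write-up makes the lemma self-contained precisely where the paper defers to the literature; the only point worth stating explicitly is that the minimizer $\chi_h$ (or the interpolant) vanishes at the endpoints so that it indeed lies in $U_h$, which holds since $\phi\in\tilde{H}^{\frac{s}{2}}(\Omega)$ with $\frac{s}{2}>\frac12$.
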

		\begin{proof}
			The relation 
			\[
			\inf_{v\in U_{h}}\Vert u-v\Vert_{H^{\alpha}(\Omega)}\leq \Vert u- \mathcal{I}_{h}u\Vert_{H^{\alpha}(\Omega)}, \quad 0\leq \alpha \leq 1,
			\]
			and the similar argument for finding an error estimation of the standard Lagrange finite element for the integer order Sobolev space { lead} to an error bound for the interpolation error in the intermediate spaces \eqref{intererror} and the special cases \eqref{wd} and \eqref{ws}; for more details see \cite{carstensen, jin2015}. 
		\end{proof}
		%------------------------------------------------------------
		\begin{theorem}\label{thml}
			Assume that $ u $ { is} the exact solution of Eq. \eqref{asli} and $ u_h $ { is} the approximate solution of the variational formulation \eqref{weak} or \eqref{weak2}. Then 
			\begin{equation}
			\Vert u-u_h\Vert_{H^{\frac{s}{2}}(\Omega)}\leq Ch^{\gamma-\frac{s}{2}} \Vert u\Vert_{H^\gamma(\Omega)},
			\end{equation}
			where $ \gamma $ differs for { the} nonlinear boundary value problems with Caputo or Riemann-Liouville fractional derivative. 
			{For  the  case of Caputo differential operator, $ \gamma $ is equal to $ s $.  In addition, $ \gamma $ belongs to the interval $[\frac{s}{2}, s-\frac{1}{2}]$  for the Riemann-Liouville fractional counterpart.}
		\end{theorem}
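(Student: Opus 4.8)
The plan is to establish a nonlinear analogue of C\'{e}a's lemma: first reduce the $\tilde{H}^{\frac{s}{2}}$-error $\Vert u-u_h\Vert$ to the best approximation error from the discrete space, and then invoke the interpolation bound of Lemma \ref{lemcars} together with the regularity of $u$ supplied by Theorems \ref{thmr1} and \ref{thmr2}. First I would record the Galerkin orthogonality: since $u$ solves \eqref{weak} (resp. \eqref{weak2}) and $u_h$ solves the discrete problem \eqref{femd} against the same functional $F$, subtracting gives $\mathcal{L}(u,v_h)-\mathcal{L}(u_h,v_h)=0$ for every $v_h$ in the discrete test space. Next, combining the coercivity of $\mathcal{A}$ from Theorem \ref{thmd} with the monotonicity of $g(x,\cdot)$, so that $\mathcal{B}(u,u-u_h)-\mathcal{B}(u_h,u-u_h)=\big(g(\cdot,u)-g(\cdot,u_h),u-u_h\big)\geq 0$, I obtain the strong-monotonicity estimate
\[
c\Vert u-u_h\Vert^2_{\tilde{H}^{\frac{s}{2}}(\Omega)}\leq \mathcal{A}(u-u_h,u-u_h)\leq \mathcal{L}(u,u-u_h)-\mathcal{L}(u_h,u-u_h),
\]
where I use that $\mathcal{A}$ is bilinear and that the two norms agree on $U\subset\tilde{H}^{\frac{s}{2}}(\Omega)$.

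Second, I would exploit Lipschitz continuity of $\mathcal{L}$. For an arbitrary $v_h$ in the discrete test space, I split $u-u_h=(u-v_h)+(v_h-u_h)$ and apply Galerkin orthogonality to the discrete increment $v_h-u_h$ to cancel its contribution, leaving $\mathcal{L}(u,u-u_h)-\mathcal{L}(u_h,u-u_h)=\mathcal{L}(u,u-v_h)-\mathcal{L}(u_h,u-v_h)$. I then bound the right-hand side term by term: the bilinear part satisfies $\vert\mathcal{A}(u-u_h,w)\vert\leq C\Vert u-u_h\Vert_{\tilde{H}^{\frac{s}{2}}(\Omega)}\Vert w\Vert_{\tilde{H}^{\frac{s}{2}}(\Omega)}$ by Theorem \ref{thm1}, while the nonlinear part obeys $\vert\mathcal{B}(u,w)-\mathcal{B}(u_h,w)\vert\leq l_M\Vert u-u_h\Vert_{L^2(\Omega)}\Vert w\Vert_{L^2(\Omega)}$ by the Lipschitz hypothesis \eqref{lip} together with the continuous embedding $\tilde{H}^{\frac{s}{2}}(\Omega)\hookrightarrow L^2(\Omega)$. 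Writing $M:=C+l_M$ and setting $w=u-v_h$ yields $c\Vert u-u_h\Vert^2\leq M\Vert u-u_h\Vert\,\Vert u-v_h\Vert$, hence $\Vert u-u_h\Vert_{H^{\frac{s}{2}}(\Omega)}\leq (M/c)\inf_{v_h}\Vert u-v_h\Vert_{H^{\frac{s}{2}}(\Omega)}$ after taking the infimum over the discrete space.

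Third, I would insert the regularity and interpolation estimates. By Theorems \ref{thmr1} and \ref{thmr2} the solution lies in $H^{\gamma}(\Omega)\cap\tilde{H}^{\frac{s}{2}}(\Omega)$, with $\gamma$ admissible up to $s-\tfrac{1}{2}$ in the Riemann--Liouville case (limited by the singular factor $x^{s-1}$) and $\gamma=s$ in the smoother Caputo case. Applying \eqref{wd} for the trial space $U_h$, respectively \eqref{ws} for the Caputo test space $V_h$, gives $\inf_{v_h}\Vert u-v_h\Vert_{H^{\frac{s}{2}}(\Omega)}\leq c\,h^{\gamma-\frac{s}{2}}\Vert u\Vert_{H^{\gamma}(\Omega)}$, and chaining this with the previous bound produces the asserted rate $\Vert u-u_h\Vert_{H^{\frac{s}{2}}(\Omega)}\leq C h^{\gamma-\frac{s}{2}}\Vert u\Vert_{H^{\gamma}(\Omega)}$.

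The main obstacle is twofold. The more delicate point is the Caputo case, which is genuinely a Petrov--Galerkin scheme ($V_h\neq U_h$): the increment $v_h-u_h$ need not belong to $V_h$, so the Galerkin orthogonality step must be carried out carefully within the trial/test pairing (or, as in \cite{jin2015}, transferred to the Riemann--Liouville setting), and one must ensure \eqref{ws} controls the error in the right space. The second issue is that the Lipschitz bound \eqref{lip} is only local, so the argument tacitly requires a uniform $L^{\infty}$ bound on both $u$ and $u_h$ guaranteeing that a single constant $l_M$ governs the nonlinear term; I would secure this from the regularity of $u$ and the stability of the discrete solution before committing to the constant $M$.
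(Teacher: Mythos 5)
Your proof is essentially correct, but it follows a genuinely different route from the paper. You prove quasi-optimality directly via the classical ``strongly monotone plus Lipschitz'' form of C\'{e}a's lemma: you absorb the nonlinear term on the coercive side using the monotonicity of $g(x,\cdot)$, so that $c\Vert u-u_h\Vert^2\leq \mathcal{L}(u,u-u_h)-\mathcal{L}(u_h,u-u_h)$, then cancel the discrete increment by Galerkin orthogonality and bound the remainder by Lipschitz continuity, arriving at $\Vert u-u_h\Vert\leq (M/c)\inf_{v_h}\Vert u-v_h\Vert$ with no constraint relating the constants. The paper instead introduces the Ritz-type projection $\mathcal{P}_h$ defined by $\mathcal{A}(u,v_h)=\mathcal{A}(\mathcal{P}_hu,v_h)$, splits $u-u_h=\xi+\eta$ with $\xi=u-\mathcal{P}_hu$, derives $\mathcal{A}(\eta,\eta)=\mathcal{B}(u_h,\eta)-\mathcal{B}(u,\eta)$, and bounds $\Vert\eta\Vert\leq \frac{l_M}{c_0-l_M}\Vert\xi\Vert$ --- a perturbation argument that \emph{requires the smallness condition} $c_0>l_M$ (the coercivity constant must dominate the Lipschitz constant of $g$) and, notably, does not exploit the monotonicity of $g$ at this stage. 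Your argument buys a cleaner, unconditional quasi-optimality estimate; the paper's buys a decomposition in which the nonlinear perturbation is isolated from the linear Ritz projection error, at the price of the extra hypothesis. Both arguments share the same two unaddressed delicacies, which you correctly flag: the Petrov--Galerkin structure in the Caputo case (the paper also tests with $\eta=\mathcal{P}_hu-u_h\in U_h$, which need not lie in $V_h$, so it has the identical gap it does not acknowledge), and the need for a uniform $L^\infty$ bound on $u$ and $u_h$ so that a single local Lipschitz constant $l_M$ applies. Your treatment is therefore at least as complete as the paper's.
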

		%---------------------------------------------------
		\begin{proof}
			Consider $ u_h\in U_h $ { is} the solution of finite element space of Eq. \eqref{asli} which satisfies  the following formulation
			\[ \mathcal{A}(u_h,v_h)+\mathcal{B}(u_h,v_h)=\langle f,v_h\rangle, \quad \quad v_h\in V_h.\]
			Next by subtracting the above equation and Eq. \eqref{weak}, we get that
			\begin{equation}\label{Au}
			\mathcal{A}(u,v)-\mathcal{A}(u_h,v_h)+\mathcal{B}(u,v)-\mathcal{B}(u_h,v_h)=\langle f,v\rangle-\langle f,v_h\rangle.
			\end{equation}
			Consider the projection operator $ \mathcal{P}_h:H^{\frac{s}{2}}(\Omega)\rightarrow U_h $ defined by 
			\begin{equation}\label{orr}
			\mathcal{A}(u,v_h)=\mathcal{A}(\mathcal{P}_hu,v_h). 
			\end{equation}
			Now by adding and subtracting $\mathcal{P}_hu$, we have 
			\begin{equation}\label{xi}u-u_h=(u-\mathcal{P}_hu)+(\mathcal{P}_hu-u_h):=\xi+\eta.
			\end{equation}
			Then,  Eq. \eqref{Au} can be rewritten as follows
			\begin{equation}
			\mathcal{A}(u,v)-\mathcal{A}(\mathcal{P}_hu,v_h)+\mathcal{A}(\mathcal{P}_hu,v_h)-\mathcal{A}(u_h,v_h)+\mathcal{B}(u,v)-\mathcal{B}(u_h,v_h)=\langle f,v\rangle-\langle f,v_h\rangle,
			\end{equation}
			therefore, from Eq. \eqref{orr} and setting $ v=v_h $, we get that
			\begin{equation}
			\mathcal{A}(\mathcal{P}_hu,v_h)-\mathcal{A}(u_h,v_h)+\mathcal{B}(u,v_h)-\mathcal{B}(u_h,v_h)=0,
			\end{equation}
			or, regarding the bilinearity of the operator $ \mathcal{A} $,
			\begin{equation}
			\mathcal{A}(\mathcal{P}_hu-u_h,v_h)+\mathcal{B}(u,v_h)-\mathcal{B}(u_h,v_h)=0.
			\end{equation}
			{ Letting} $ v_h=\eta $, we have
			\begin{equation}\label{la}
			\mathcal{A}(\eta,\eta)=\mathcal{B}(u_h,\eta)-\mathcal{B}(u,\eta).
			\end{equation}
			Next, by the coercivity of $\mathcal{A}  $, there is a constant $ c_0 $ such that
			\begin{equation}\label{coer}
			\mathcal{A}(\eta, \eta)\geq c_0\Vert \eta \Vert^2_{\tilde{H}^{\frac{s}{2}}(\Omega)}.
			\end{equation}
			On the other hand, 
			\begin{equation}\label{ahoo}
			\begin{split}
			\vert\mathcal{B}(u_h,\eta)-\mathcal{B}(u,\eta)\vert &=\vert\big(g(.,u)-g(.,u_h), \eta\big)\vert\\
			&=\vert\Big(g(.,u)-g(.,\mathcal{P}_hu)+g(.,\mathcal{P}_hu)-g(.,u_h), \eta\Big)\vert\\
			&\leq \Vert g(.,u)-g(.,\mathcal{P}_hu)\Vert \Vert \eta \Vert+\Vert g(.,\mathcal{P}_hu)-g(.,u_h)\Vert\Vert \eta \Vert\\
			&\leq l_M\Vert \eta \Vert_{H^{\frac{s}{2}}(\Omega)}(\Vert \xi \Vert_{H^{\frac{s}{2}}(\Omega)} +\Vert \eta \Vert_{H^{\frac{s}{2}}(\Omega)}),
			\end{split}
			\end{equation}
			where $ l_M $ is verified in \eqref{lip}.
			If $ \eta\neq 0 $ and $ c_{0}>l_M, $ then one can deduce from \eqref{coer} and \eqref{ahoo} that
			\begin{equation}\label{eta}
			\Vert \eta\Vert_{H^{\frac{s}{2}}(\Omega)} \leq \frac{l_M}{c_{0}-l_M}\Vert \xi\Vert_{H^{\frac{s}{2}}(\Omega)}.
			\end{equation}
			%Moreover, Cea's Lemma can extend for nonlinear problems with strongly monotonicity and Lipschitz condition properties \cite[Lemma 5.3.4, p. 323]{ciarlet}. Therefore,
			%\begin{equation}\label{cea}
			%  \Vert u-u_h \Vert
			%  \leq \inf\limits_{\phi_h \in U_h}\Vert u-u_h \Vert. \end{equation}
			Consequently by Eqs. \eqref{xi}, \eqref{eta} and Lemma \ref{lemcars}, we get that 
			\begin{equation}
			\Vert u-u_h \Vert_{H^{\frac{s}{2}}(\Omega)} \leq C h^{\gamma-\frac{s}{2}}\Vert u\Vert_{H^\gamma(\Omega)}. \end{equation}
			It is conspicuous that for each  derivative case with different regularity, $ \gamma $ should be different. { Using the regularity Theorems \ref{thmr1} and \ref{thmr2} for $f \in L^2(\Omega)$, one can deduce that  $ \gamma \in [\frac{s}{2}, s-\frac{1}{2}]$ for { the} Riemann-Liouville  fractional derivative operator, and  
				$ \gamma=s$ in the Caputo fractional one.}
		\end{proof}
		%=----------------------------------------------
		\begin{remark}\label{reml}
			Let $f\in H^{\alpha}(\Omega)$ where $\alpha$ varies in the interval $\left[0,\frac{1}{2}\right)$ and $\alpha +s>\frac{3}{2}$. 
			If $\alpha +s<2$, then Theorem \ref{thmr2} yields that { the $\gamma$   arising in the error estimation given  for the Caputo fractional operator is equal to $ \alpha +s$ which means} that $\Vert u-u_h\Vert_{H^{\frac{s}{2}}(\Omega)}=\mathcal{O}(h^{\alpha+\frac{s}{2}})$. Otherwise, $\gamma=2$ and $\Vert u-u_h\Vert_{H^{\frac{s}{2}}(\Omega)}=\mathcal{O}(h^{2-\frac{s}{2}})$.
		\end{remark}
		%-----------------------------------
		{ We finalize this section with a remark on the stability of the Galerkin finite element method. 
			\begin{remark}
				By means of the triangular inequality, it is easily seen that 
				\begin{equation*}
				\Vert u_h\Vert_{H^{\frac{s}{2}}(\Omega)}\leq \Vert u-u_h\Vert_{H^{\frac{s}{2}}(\Omega)}+\Vert u\Vert_{H^{\frac{s}{2}}(\Omega)}.
				\end{equation*}
				Using Theorem \ref{thm1}, we obtain that 
				\begin{equation}
				\Vert u_h\Vert_{H^{\frac{s}{2}}(\Omega)}\leq Ch^{\gamma-\frac{s}{2}}\Vert u\Vert_{H^{\gamma}(\Omega)}+\Vert u\Vert_{H^{\frac{s}{2}}(\Omega)},
				\end{equation}
				where $C$ is a constant and $ \gamma $ is equal to $ s $, if one considers the Caputo differential operator and  $ \gamma $ belongs to $[\frac{s}{2}, s-\frac{1}{2}]$  for the Riemann-Liouville fractional operator case. Therefore, we get an upper bound for the approximate solution in the case of Caputo fractional derivative
				\begin{equation}
				\Vert u_h\Vert_{H^{\frac{s}{2}}(\Omega)}\leq Ch^{\frac{s}{2}}\Vert u\Vert_{H^{s}(\Omega)}+\Vert u\Vert_{H^{\frac{s}{2}}(\Omega)},
				\end{equation}
				and the following one in the case of Riemann-Liouville fractional derivative operator
				\begin{equation}
				\Vert u_h\Vert_{H^{\frac{s}{2}}(\Omega)}\leq C\Vert u\Vert_{H^{\frac{s}{2}}(\Omega)}.
				\end{equation}
				So the final results on the stability discussion is obtained by the argument given in Remark \ref{stab}. 
		\end{remark}} 
\section{Numerical Illustrations}\label{num}
		The numerical experiments are employed to exhibit the applicability of the Galerkin finite element method for { the} fractional nonlinear boundary value problems with Caputo and Riemann-Liouville derivatives. 
		The experiments are implemented in $\textsl{Mathematica}^{\circledR}$ software platform. We report the absolute error along with the numerical and theoretical { rates} of convergence for some examples which satisfy the assumptions considered in the previous sections.  Furthermore,  the numerical algorithm  is examined for some examples with the absence of the mentioned assumptions.
		
		In general, for the numerical experiment with { the} Galerkin method, one of the main issues is the approximation of the integrals. In our examination, the Galerkin finite element solution is obtained from the fully discrete weak from:
		
		Find $u_{h} \in A_{h}$ such that 
		\begin{equation}\label{dfemd}
		\mathcal{L}_{h}(u_{h},v_{h})=F(v_{h}), \quad \forall v_{h}\in V_{h}.
		\end{equation}
		In the above nonlinear system of equations, we have utilized Gauss-Kronrod quadrature formula to compute the integrals that need to be evaluated numerically. This { happens} mainly for the integrals involving the nonlinear term.  
		Furthermore, to solve the nonlinear system, we employ { the} Newton's iteration method. { In order to do this}, consider the bilinear form $N_{h}(u_{h};.,.)$ defined on $S^r_{\mathcal{T}}(\Omega) \times S^r_{\mathcal{T}}(\Omega)$ by
		\[
		N_{h}(u_{h};w_{h},v_{h})=(_{0}^{R}D^\frac{s}{2}_x w_{h},~ _{x}^{R}D^\frac{s}{2}_1 v_{h})+(g_{u}(.,u_{h})w_{h},v_{h}).
		\]
		The Newton's method for approximating $u_{h}$ by a sequence $\{u^k_{h}\}_{k \in \mathbb{N}}$ in $S^r_{\mathcal{T}}(\Omega)$ can be written as 
		\[
		N_{h}(u^k_{h};u^{k+1}_{h}-u^k_{h},v_{h})=F(v_{h})-\mathcal{L}_{h}(u^{k}_{h},v_{h}), \quad \forall v_{h}\in S^r_{\mathcal{T}}(\Omega),
		\]
		where $u^{0}_{h}\in S^r_{\mathcal{T}}(\Omega)$ is an initial guess  chosen by the steepest gradient algorithm.   
		
		{ Through} tables and figures, we notify the  experimental and { the} possible { theoretical} convergence rates (the reported numbers in the parentheses) for the finite element approximation of the nonlinear boundary value problems with the Riemann-Liouville and Caputo fractional derivatives in $L^2$ and $H^{\frac{s}{2}}$-norms. 
		
		\begin{example}\label{u3}
			Consider the fractional derivative equation \eqref{asli} with 
			$ g(x,u(x))=3xu^3(x)$.  The right hand side function $ f(x)$ is chosen such that 
			\begin{description}
				\item[(a)] the exact solution  with  Riemann-Liouville fractional derivative is $u(x)= \frac{1}{\Gamma(s+1)}(x^{s-1}-x^{s})$; 
				\item[(b)] the exact solution is $u(x)=\frac{1}{\Gamma(s+1)}(x-x^{s})$, where the derivative operator is of  Caputo type.
			\end{description}
			This problem satisfies the assumptions introduced in Section \ref{secconv}. Therefore, the convergence rates of { the} Caputo and { the} Riemann derivative cases in term of $ H^{\frac{s}{2}} $ error norm are $ O(h^{\frac{s}{2}}) $ and $ O(h^{\frac{s-1}{2}})$, respectively.  This  argument directly follows from  Theorem \ref{thml} for the function $ f(x) $ { that} belongs to $ L^2(\Omega).$ 
			Tables \ref{tabbc1} and \ref{tabbr1} report the $ L^2$ and $ H^{\frac{s}{2}} $ error norms for different $ s\in (1,2)$. Moreover,  we have provided some figures to exhibit both { the} theoretical and the practical rates of convergence. 	Figures \ref{fig1} and \ref{fig2} display the absolute errors for {the} Caputo and { the} Riemann-Liouville fractional derivatives with the above nonlinear term. In figures, the dashed lines show the theoretical convergence rate. 
			
			%%%% Name of the program: Ca+f=1+u^3.nb
		\end{example}
		%------------------------------------------------------
		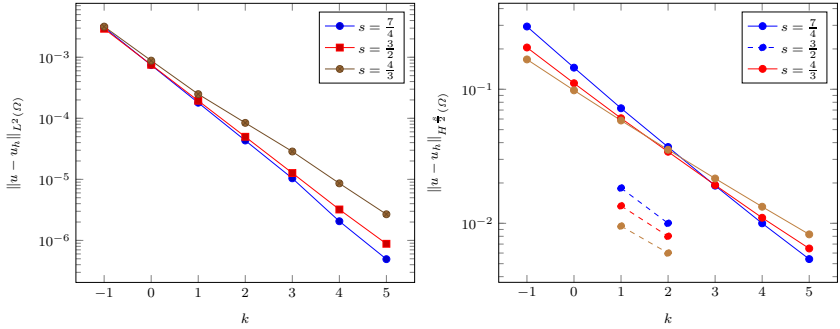
\begin{figure}[ht]
			\centering
			\begin{tikzpicture}
			[scale=0.65, transform shape]
			\begin{axis}[%title=Error of  Ex. \ref{ex2},
			xlabel={$ k $},ylabel={$\Vert u-u_{h} \Vert_{L^{{2}}(\Omega)}$}, ymode=log, legend entries={$s=\frac{7}{4}$,$s=\frac{3}{2}$,$s=\frac{4}{3}$}, legend pos=north east]
			\addplot coordinates {
				%$h=\frac{5}{2}$
				(-1,3.08E-03)
				(0,7.44E-04)
				(1,1.80E-04)
				(2,4.33E-05)
				(3,1.04E-05)
				(4,2.06E-06)
				(5,4.93E-07)
			};
			\addplot coordinates {
				%$h=\frac{5}{4}$
				(-1,2.93E-03)
				(0,7.53E-04)
				(1, 1.94E-04)
				(2,4.98E-05)
				(3,1.27E-05)
				(4,3.21E-06)
				(5,8.80E-07)
			};
			\addplot coordinates {
				%$h=\frac{5}{8}$
				(-1,3.18E-03)
				(0,8.81E-04)
				(1, 2.49E-04)
				(2,8.42E-05)
				(3,2.86E-05)
				(4,8.60E-06)
				(5,2.68E-06)
			};
			\end{axis}
			\end{tikzpicture}
			%-------------------------------------------------------------------------------------
			\begin{tikzpicture}
			[scale=0.65, transform shape]
			\begin{axis}[xlabel={$ k $},ylabel={$\Vert u-u_{h} \Vert_{H^{\frac{s}{2}}(\Omega)}$}, ymode=log, legend entries={$s=\frac{7}{4}$,$s=\frac{3}{2}$,$s=\frac{4}{3}$}, legend pos=north east]
			\addplot[color=blue,mark=*] coordinates {
				%$h=\frac{5}{2}$
				(-1,2.94E-01)
				(0,1.45E-01)
				(1,7.22E-02)
				(2,3.71E-02)
				(3,1.91E-02)
				(4,1.00E-02)
				(5,5.41E-03)
			};
			\addplot[color=blue,mark=*, dashed] coordinates{
				(1,1.834E-02)
				(2,1.00E-02)
			};
			\addplot[color=red,mark=*] coordinates {
				%$h=\frac{5}{4}$
				(-1,2.05E-01)
				(0,1.110E-01)
				(1, 6.08E-02)
				(2,3.42E-02)
				(3,1.93E-02)
				(4,1.10E-02)
				(5,6.49E-03)
			};
			\addplot[color=red,mark=*, dashed] coordinates{
				(1,1.35E-02)
				(2,8.00E-03)
			};
			\addplot[color=brown,mark=*] coordinates {
				%$h=\frac{5}{8}$
				(-1,1.67E-01)
				(0,9.82E-02)
				(1, 5.85E-02)
				(2,3.54E-02)
				(3,2.16E-02)
				(4,1.33E-02)
				(5,8.27E-03)
			};	\addplot[color=brown,mark=*, dashed] coordinates{
				(1,9.54E-03)
				(2,6.00E-03)
			};
			\end{axis}
			\end{tikzpicture}
			\caption{Plots of the absolute error in  $L^{2}$  and $H^\frac{s}{2}$-norms in logarithmic scale for Example \ref{u3} with { the} Caputo fractional derivative. }%
			\label{fig1}%
		\end{figure}
		%----------------------------------------------------------------------
		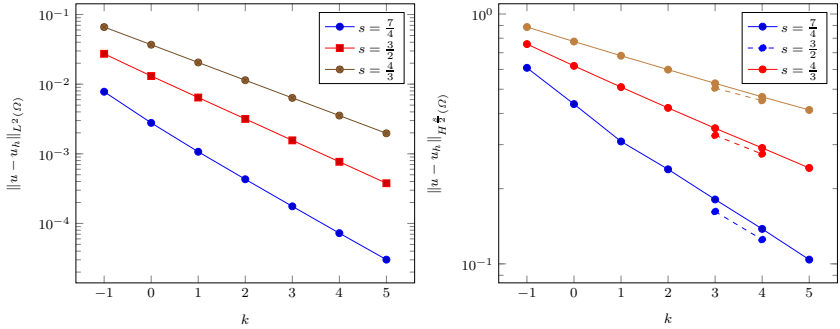
\begin{figure}[h!]
			\centering
			\begin{tikzpicture}
			[scale=0.65, transform shape]
			\begin{axis}[%title=Error of  Ex. \ref{ex2},
			xlabel={$ k $},ylabel={$\Vert u-u_{h} \Vert_{L^{2}(\Omega)}$}, ymode=log, legend entries={$s=\frac{7}{4}$,$s=\frac{3}{2}$,$s=\frac{4}{3}$}, legend pos=north east]
			\addplot coordinates {
				%$h=\frac{5}{2}$
				(-1,7.79E-03)
				(0,2.78E-03)
				(1,1.07E-03)
				(2,4.31E-04)
				(3,1.760E-04)
				(4,7.25E-05)
				(5,3.02E-05)
			};
			\addplot coordinates {
				%$h=\frac{5}{4}$
				(-1,2.73E-02)
				(0,1.31E-02)
				(1, 6.43E-03)
				(2,3.17E-03)
				(3,1.56E-03)
				(4,7.68E-04)
				(5,3.78E-04)
			};
			\addplot coordinates {
				%$h=\frac{5}{8}$
				(-1,6.61E-02)
				(0,3.69E-02)
				(1, 2.05E-02)
				(2,1.14E-02)
				(3,6.35E-03)
				(4,3.54E-03)
				(5,1.97E-03)
			};
			\end{axis}
			\end{tikzpicture}
			%------------------------------------------------
			\begin{tikzpicture}
			[scale=0.65, transform shape]
			\begin{axis}[%title=Error of  Ex. \ref{ex2},
			xlabel={$ k $},ylabel={$\Vert u-u_{h} \Vert_{H^{\frac{s}{2}}(\Omega)}$}, ymode=log, legend entries={$s=\frac{7}{4}$,$s=\frac{3}{2}$,$s=\frac{4}{3}$}, legend pos=north east]
			\addplot coordinates {
				%$h=\frac{5}{2}$
				(-1,6.09E-01)
				(0,4.36E-01)
				(1,3.09E-01)
				(2,2.39E-01)
				(3,1.81E-01)
				(4,1.38E-01)
				(5,1.04E-01)
			};
			\addplot[color=blue,mark=*, dashed] coordinates{
				(3,1.62E-01)
				(4,1.25E-01)
			};
			\addplot[color=red,mark=*] coordinates {
				%$h=\frac{5}{4}$
				(-1,7.58E-01)
				(0,6.20E-01)
				(1, 5.10E-01)
				(2,4.21E-01)
				(3,3.49E-01)
				(4,2.91E-01)
				(5,2.42E-01)
			};
			\addplot[color=red,mark=*, dashed] coordinates{
				(3,3.27E-01)
				(4,2.750E-01)
			};
			\addplot[color=brown,mark=*] coordinates {
				%$h=\frac{5}{8}$
				(-1,8.87E-01)
				(0,7.76E-01)
				(1, 6.81E-01)
				(2,5.99E-01)
				(3,5.28E-01)
				(4,4.66E-01)
				(5,4.13E-01)
			};
			\addplot[color=brown,mark=*, dashed] coordinates{
				(3,5.05E-01)
				(4,4.50E-01)
			};
			\end{axis}
			%	\draw[blue,line width = 0.3mm, dashed](3.5,-7.72E-01) -- (5.5,-8.86E-01);
			%	\draw[red,line width = 0.3mm, dashed](3.5,3.01E-00) -- (5.5,2.51E-00);
			%	\draw[brown,line width = 0.3mm, dashed](3.5,3.91E-00) -- (5.5,3.58E-00);
			\end{tikzpicture}
			\caption{Plots of the absolute error in $L^{2}$ and  $H^\frac{s}{2}$-norms  in logarithmic scale for Example \ref{u3} with { the} Riemann-Liouville fractional derivative. }%
			\label{fig2}%
		\end{figure}
		%--------------------------------------------
		\begin{table}[!ht]
			\begin{center}
				\begin{footnotesize}
					\caption{The absolute error in $ L^2$ and $ H^{\frac{s}{2}}$-norms for different { value} $ s=\frac{7}{4},\frac{3}{2},\frac{4}{3}$ and  mesh size $h=\frac{1}{2^k\times 10} $ for Example \ref{u3} with the Caputo fractional derivative operator.}\label{tabbc1}\vspace*{0.1in}
					\begin{tabular}{|c|c|c|c|c|c|c|c|c|c|}
						\hline
						$s$& $ k$& $ -1$ & $0 $&$ 1$& $ 2$ & $3 $&$ 4$&$ 5$  & Rate\\
						\hline
						$\dfrac{7}{4}$ & $ L^2$-norm & $ 3.08\textrm{e}-03$ & $7.44\textrm{e}-04$&$ 1.80\textrm{e}-04$& $ 4.33\textrm{e}-05$ & $1.04\textrm{e}-05$&$2.06\textrm{e}-06$&$ 4.93\textrm{e}-07$ & 2.047\\
						$ $& $ H^{\frac{s}{2}}$-norm& $2.94 \textrm{e}-01$ & $1.45\textrm{e}-01 $&$ 7.22\textrm{e}-02$& $ 3.71\textrm{e}-02$ & $1.91\textrm{e}-02 $&$1.00 \textrm{e}-02$&$ 5.41\textrm{e}-03$  & 0.887 (0.875)\\
						\hline
						$\dfrac{3}{2}$& $ L^2$-norm& $ 2.93\textrm{e}-03$ & $7.53\textrm{e}-04  $&$ 1.94\textrm{e}-04$& $ 4.98\textrm{e}-05$ & $1.27\textrm{e}-05 $& $3.21\textrm{e}-06$ & $ 8.08\textrm{e}-07$ & 1.956\\
						$ $& $ H^{\frac{s}{2}}$-norm & $2.05\textrm{e}-01$ & $1.11\textrm{e}-01$&$ 6.08\textrm{e}-02$& $ 3.42\textrm{e}-02$ & $1.93\textrm{e}-02 $&$1.10\textrm{e}-02$&$ 6.49\textrm{e}-03$ & 0.769 (0.75)\\
						\hline
						$\dfrac{4}{3}$& $ L^2$-norm& $ 3.18\textrm{e}-03$ & $8.81\textrm{e}-04$&$ 2.49\textrm{e}-04$& $ 8.42\textrm{e}-05$ & $2.68\textrm{e}-05$&$ 8.60\textrm{e}-06$&$ 2.68\textrm{e}-06$  & 1.567\\
						$ $& $ H^{\frac{s}{2}}$-norm & $1.67\textrm{e}-01$ & $9.82\textrm{e}-02 $&$ 5.85\textrm{e}-02$& $3.54\textrm{e}-02$ & $2.16\textrm{e}-02 $&$1.33\textrm{e}-02$&$ 8.27\textrm{e}-03$  & 0.686 (0.67)\\\hline
					\end{tabular}
				\end{footnotesize}
			\end{center}
		\end{table}
		%---------------------------------------------------
		% Name of the program: Ri+f=1+u^3.nb
		\begin{table}[!ht]
			\begin{center}
				\begin{footnotesize}
					\caption{The absolute error in $ L^2$ and $ H^{\frac{s}{2}}$-norms for different { values} $ s=\frac{7}{4},\frac{3}{2},\frac{4}{3}$ and  mesh size $h=\frac{1}{2^k\times 10} $ for Example \ref{u3} with the Riemann-Liouville fractional operator.}\label{tabbr1}\vspace*{0.1in}
					\begin{tabular}{|c|c|c|c|c|c|c|c|c|c|}
						\hline
						$s$& $ k$& $ -1$ & $0 $&$ 1$& $ 2$ & $3 $&$ 4$&$ 5$  & Rate\\
						\hline
						$\dfrac{7}{4}$ & $ L^2$-norm & $ 7.79\textrm{e}-03$ & $2.78\textrm{e}-03$&$ 1.07\textrm{e}-03$& $ 4.31\textrm{e}-04$ & $1.76\textrm{e}-04$&$ 7.25\textrm{e}-05$&$ 3.02\textrm{e}-05$ & 1.263 \\
						$ $& $ H^{\frac{s}{2}}$-norm & $6.09\textrm{e}-01$ & $4.36\textrm{e}-01 $&$ 3.19\textrm{e}-01$& $2.39\textrm{e}-01$ & $1.81\textrm{e}-01 $&$1.38\textrm{e}-01$&$ 1.04\textrm{e}-01$  & 0.393 (0.375)\\
						\hline
						$\dfrac{3}{2}$& $ L^2$-norm& $ 2.73\textrm{e}-02$ & $1.31\textrm{e}-02  $&$ 6.43\textrm{e}-03$& $ 3.17\textrm{e}-03$ & $1.56\textrm{e}-03 $&$ 7.68\textrm{e}-04$&$ 3.78\textrm{e}-04$ & 1.020\\
						$ $& $ H^{\frac{s}{2}}$-norm & $7.58\textrm{e}-01$ & $6.20\textrm{e}-01 $&$ 5.10\textrm{e}-01$& $4.21\textrm{e}-01$ & $3.49\textrm{e}-01 $&$2.91\textrm{e}-01$&$ 2.42\textrm{e}-01$  & 0.264 (0.250)\\
						\hline
						$\dfrac{4}{3}$& $ L^2$-norm& $ 6.61\textrm{e}-02$ & $3.69\textrm{e}-02$&$ 2.05\textrm{e}-02$& $ 1.14\textrm{e}-02$ & $6.35\textrm{e}-03$&$ 3.54\textrm{e}-03$&$ 1.97\textrm{e}-03$  & 0.840\\
						$ $& $ H^{\frac{s}{2}}$-norm & $8.87\textrm{e}-01$ & $7.76\textrm{e}-01 $&$ 6.81\textrm{e}-01$& $5.99\textrm{e}-01$ & $5.28\textrm{e}-01 $&$4.66\textrm{e}-01$&$ 4.13\textrm{e}-01$  & 0.173 (0.167)\\\hline
					\end{tabular}
				\end{footnotesize}
			\end{center}
		\end{table}
		%---------------------------------------------------------------------
		
		%---------------------------------------------
		\begin{example}\label{u5}
			In this example, we discuss the approximation of \eqref{asli} with 
			$ g(x,u(x))=\sin(x)u^5(x)$. The right hand side function $ f(x)$ { is} chosen such that 
			\begin{description}
				\item[(a)] the exact solution is $u(x)= \frac{\Gamma(1.5)}{\Gamma(s+1.5)}(x^{s-1}-x^{s+0.5})$ for the Riemann-Liouville case.
				\item[(b)] the exact solution is $u(x)= \frac{\Gamma(1.5)}{\Gamma(s+1.5)}(x-x^{s+0.5})$ when Eq. \eqref{asli} entails the Caputo fractional derivative.
			\end{description}
			By a similar reasoning as Example \ref{u3}, it is seen that the assumptions discussed in the theoretical parts hold. Therefore, we expect $ O(h^{\frac{s}{2}}) $ and $ O(h^{\frac{s-1}{2}})$ convergence rates for {the} Caputo and { the} Riemann-Liouville fractional differential operators, respectively. This claim is verified by the  numerical results reported { in}  	Tables \ref{tab2c} and \ref{tab2R} which exhibit the absolute errors in $ L^2$ and $H^\frac{s}{2}$-norms for different $ s\in (1,2)$.
		\end{example}
		%------------------------------------------------------
		\begin{table}[!ht]
			\begin{center}
				\begin{footnotesize}
					\caption{The absolute error in $ L^2$ and  $ H^{\frac{s}{2}}$-norms for different { values} $ s=\frac{7}{4},\frac{3}{2},\frac{4}{3}$ and  mesh size $h=\frac{1}{2^k\times 10} $ for Example \ref{u5} with the Caputo fractional operator.}\label{tab2c}\vspace*{0.1in}
					\begin{tabular}{|c|c|c|c|c|c|c|c|c|c|}
						\hline
						$s$& $ k$& $ -1$ & $0 $&$ 1$& $ 2$ & $3 $&$ 4$&$ 5$  & Rate\\
						\hline
						$\dfrac{7}{4}$ & $ L^2$-norm & $ 2.67\textrm{e}-03$ & $6.54\textrm{e}-04$&$ 1.60\textrm{e}-04$& $ 3.96\textrm{e}-05$ & $9.83\textrm{e}-06$&$2.44\textrm{e}-06$&$ 6.09\textrm{e}-07$ & 2.002\\
						$ $& $ H^{\frac{s}{2}}$-norm& $ 3.01\textrm{e}-01$ & $1.61\textrm{e}-01 $&$ 8.66\textrm{e}-02$& $ 4.67\textrm{e}-02$ & $2.52\textrm{e}-02 $&$1.36 \textrm{e}-02$&$ 7.37\textrm{e}-03$  &0.884 (0.875)\\
						\hline
						$\dfrac{3}{2}$& $ L^2$-norm& $ 2.64\textrm{e}-03$ & $6.44\textrm{e}-04  $& $ 1.59\textrm{e}-04$& $ 3.94\textrm{e}-05$ & $9.79\textrm{e}-06 $& $2.54\textrm{e}-06$ & $6.16\textrm{e}-07$ & 1.992\\
						$ $& $ H^{\frac{s}{2}}$-norm & $2.07\textrm{e}-01$ & $1.10\textrm{e}-01$&$ 5.91\textrm{e}-02$& $ 3.18\textrm{e}-02$ & $1.72\textrm{e}-02 $&$9.34\textrm{e}-03$&$5.10\textrm{e}-03$ & 0.872 (0.75)\\
						\hline
						$\dfrac{4}{3}$& $ L^2$-norm& $2.74 \textrm{e}-03$ & $6.51\textrm{e}-04$&$ 1.57\textrm{e}-04$& $ 3.84\textrm{e}-05$ & $9.53\textrm{e}-06$&$ 2.37\textrm{e}-06$&$ 5.91\textrm{e}-07$  & 2.003\\
						$ $& $ H^{\frac{s}{2}}$-norm & $1.70\textrm{e}-01$ & $9.20\textrm{e}-02 $&$5.00 \textrm{e}-02$& $2.74\textrm{e}-02$ & $1.52\textrm{e}-02 $&$8.67\textrm{e}-02$&$ 5.00\textrm{e}-03$  & 0.793 (0.67)\\\hline
					\end{tabular}
				\end{footnotesize}
			\end{center}
		\end{table}
		%-----------------------------------------------------------------------
		%-----------------------------------------------------------------------------
		\begin{table}[!ht]
			\begin{center}
				\begin{footnotesize}
					\caption{The absolute errors in  $ L^2$ and $ H^{\frac{s}{2}}$-norms for different { values} $ s=\frac{7}{4},\frac{3}{2},\frac{4}{3}$ and  mesh size $h=\frac{1}{2^k\times 10} $ for Example \ref{u5} with the Riemann-Liouville fractional  operator.}\label{tab2R}\vspace*{0.1in}
					\begin{tabular}{|c|c|c|c|c|c|c|c|c|c|}
						\hline
						$s$& $ k$& $ -1$ & $0 $&$ 1$& $ 2$ & $3 $&$ 4$&$ 5$  & Rate\\
						\hline
						$\dfrac{7}{4}$ & $ L^2$-norm& $ 4.54\textrm{e}-03$ & $1.53\textrm{e}-03 $&$ 5.83\textrm{e}-04$& $ 2.35\textrm{e}-04$ & $9.73\textrm{e}-05$&$ 4.08\textrm{e}-05$&$ 1.74\textrm{e}-05$ & 1.229 \\
						$ $& $ H^{\frac{s}{2}}$-norm& $ 6.71\textrm{e}-01$ & $4.74\textrm{e}-01 $&$ 3.49\textrm{e}-01$& $ 2.62\textrm{e}-01$ & $1.97\textrm{e}-01 $&$ 1.48\textrm{e}-01$&$ 1.13\textrm{e}-01$  & 0.391 (0.375)\\
						\hline
						$\dfrac{3}{2}$& $ L^2$-norm& $ 1.56\textrm{e}-02$ & $7.55\textrm{e}-03  $&$ 3.73\textrm{e}-03$& $ 1.85\textrm{e}-03$ & $9.21\textrm{e}-03 $&$ 4.60\textrm{e}-04$&$ 2.28\textrm{e}-04$ & 1.002\\
						$ $& $ H^{\frac{s}{2}}$-norm & $ 7.64\textrm{e}-01$ & $6.27\textrm{e}-01$&$ 5.18\textrm{e}-01$& $ 4.30\textrm{e}-01$ & $3.58\textrm{e}-01 $&$ 2.98\textrm{e}-01$&$ 2.49\textrm{e}-01$ & 0.261 (0.250)\\
						\hline
						$\dfrac{4}{3}$& $ L^2$-norm& $ 4.03\textrm{e}-02$ & $2.23\textrm{e}-02 $&$ 1.24\textrm{e}-02$& $ 6.96\textrm{e}-03$ & $3.97\textrm{e}-03 $&$ 2.25\textrm{e}-03$&$ 1.29\textrm{e}-03$  & 0.801\\
						$ $& $ H^{\frac{s}{2}}$-norm & $8.57\textrm{e}-01$ & $7.52\textrm{e}-01 $&$ 6.63\textrm{e}-01$& $ 5.83\textrm{e}-01$ & $5.13\textrm{e}-01 $&$ 4.52\textrm{e}-01$&$ 3.98\textrm{e}-01$  & 0.182 (0.167)\\\hline
					\end{tabular}
				\end{footnotesize}
			\end{center}
		\end{table}
		%---------------------------------------------------------------------------
		%-----------------------------------------------------
		\begin{example}\label{exexp}
			Consider the nonlinear Riemann-Liouville fractional differential equation \eqref{asli} with 
			$ g(x,u(x))=x\exp(u(x))$. The right hand side function $f(x)$ is chosen such that the exact solution $u(x)$ is 
			\[
			u(x)=
			\frac{1}{\Gamma(s+2)}(x^{s-1}-x^{s+1})-\frac{2}{\Gamma(s+3)}(x^{s-1}-x^{s+2}). 
			\]
			Tables \ref{tab3R} reports the absolute error in $ L^2$ and $ H^{\frac{s}{2}}$-norms for different $ s\in (1,2)$ with the above nonlinear term.
		\end{example}
		%---------------------------------------------------------------------------------------
		
		\begin{table}[!ht]
			\begin{center}
				\begin{footnotesize}
					\caption{The absolute error in $ L^2$ and $H^{\frac{s}{2}}$-norms for different { values} $ s=\frac{7}{4},\frac{3}{2},\frac{4}{3}$ and  mesh size $h=\frac{1}{2^k\times 10} $ for Example \ref{exexp} with the Riemann-Liouville fractional derivative.}\label{tab3R}\vspace*{0.1in}
					\begin{tabular}{|c|c|c|c|c|c|c|c|c|c|}
						\hline
						$s$& $ k$& $ -1$ & $0 $&$ 1$& $ 2$ & $3 $&$ 4$&$ 5$  & Rate\\
						\hline
						$\dfrac{7}{4}$ & $ L^2$-norm & $ 1.17\textrm{e}-03$ & $4.24\textrm{e}-04$&$ 1.58\textrm{e}-04$& $ 6.03\textrm{e}-05$ & $2.37\textrm{e}-05$&$9.42\textrm{e}-06$&$ 3.82\textrm{e}-06$ & 1.301\\
						$ $& $ H^{\frac{s}{2}}$-norm& $ 3.09\textrm{e}-01$ & $2.23\textrm{e}-01 $&$ 1.64\textrm{e}-01$& $ 1.21\textrm{e}-01$ & $9.10\textrm{e}-02 $&$ 6.92\textrm{e}-02$&$ 5.28\textrm{e}-02$  & 0.389 \\
						\hline
						$\dfrac{3}{2}$& $ L^2$-norm& $ 4.59\textrm{e}-03$ & $2.19\textrm{e}-03  $&$ 1.10\textrm{e}-03$& $ 5.46\textrm{e}-04$ & $2.73\textrm{e}-04 $& $1.38\textrm{e}-04$ & $6.95\textrm{e}-05$ & 0.987\\
						$ $& $ H^{\frac{s}{2}}$-norm & $4.00\textrm{e}-01$ & $3.27\textrm{e}-01$&$ 2.68\textrm{e}-01$& $2.20 \textrm{e}-01$ & $1.82\textrm{e}-01 $&$1.51\textrm{e}-01$&$1.26\textrm{e}-01$ & 0.266\\
						\hline
						$\dfrac{4}{3}$& $ L^2$-norm& $ 1.13\textrm{e}-02$ & $6.27\textrm{e}-03$&$ 3.24\textrm{e}-03$& $ 1.71\textrm{e}-03$ & $9.10\textrm{e}-04$&$ 5.06\textrm{e}-04$&$ 2.82\textrm{e}-05$  & 0.845\\
						$ $& $ H^{\frac{s}{2}}$-norm & $4.72\textrm{e}-01$ & $4.12\textrm{e}-01 $&$ 3.62\textrm{e}-01$& $3.19\textrm{e}-01$ & $2.81\textrm{e}-01 $&$2.48\textrm{e}-01$&$ 2.19\textrm{e}-01$  & 0.178 \\\hline
					\end{tabular}
				\end{footnotesize}
			\end{center}
		\end{table}
	%----------------------------------------------------
		\begin{example}\label{exu2}
			We present  the nonlinear Caputo fractional differential { equation} \eqref{asli} with 
			$ g(x,u(x))=(u(x)-x)^2, $ where the exact solution is $u(x)=\frac{\Gamma(\frac{3}{4})}{\Gamma(s+\frac{3}{4})}(x-x^{s-\frac{1}{4}})$.
			
			Table \ref{tab4c} reports the absolute errors in $ L^2$ and $ H^{\frac{s}{2}}$-norms for different $ s\in (1,2)$ with the above nonlinear term.
			%---------------------------------------------------
			\begin{table}[!ht]
				\begin{center}
					\begin{footnotesize}
						\caption{The absolute error in $ L^2$ and  $ H^{\frac{s}{2}}$-norms for different { values} $ s=\frac{7}{4},\frac{3}{2},\frac{4}{3}$ and  mesh size $h=\frac{1}{2^k\times 10} $ for the Caputo fractional operator for Example \ref{exu2}.}\label{tab4c}\vspace*{0.1in}
						\begin{tabular}{|c|c|c|c|c|c|c|c|c|c|}
							\hline
							$s$& $ k$& $ -1$ & $0 $&$ 1$& $ 2$ & $3 $&$ 4$&$ 5$  & Rate\\
							\hline
							$\dfrac{7}{4}$ & $ L^2$-norm & $ 3.94\textrm{e}-03$ & $1.01\textrm{e}-03$&$ 2.57\textrm{e}-04$& $ 6.56\textrm{e}-05$ & $1.68\textrm{e}-05$&$4.30\textrm{e}-06$&$ 1.10\textrm{e}-06$ & 1.963\\
							$ $& $ H^{\frac{s}{2}}$-norm& $ 3.44\textrm{e}-01$ & $1.79\textrm{e}-01 $&$ 9.32\textrm{e}-02$& $ 4.85\textrm{e}-02$ & $2.53\textrm{e}-02 $&$1.32 \textrm{e}-02$&$ 6.89\textrm{e}-03$  & 0.938 \\
							\hline
							$\dfrac{3}{2}$& $ L^2$-norm& $ 3.27\textrm{e}-03$ & $9.70\textrm{e}-04  $& $ 2.89\textrm{e}-04$& $ 8.59\textrm{e}-05$ & $2.56\textrm{e}-05 $& $7.61\textrm{e}-06$ & $2.26\textrm{e}-06$ & 1.749\\
							$ $& $ H^{\frac{s}{2}}$-norm & $2.24\textrm{e}-01$ & $1.32\textrm{e}-01$&$ 7.84\textrm{e}-02$& $ 4.48\textrm{e}-02$ & $2.67\textrm{e}-02 $&$1.59\textrm{e}-02$&$9.49\textrm{e}-03$ & 0.745 \\
							\hline
							$\dfrac{4}{3}$& $ L^2$-norm& $ 1.93\textrm{e}-03$ & $6.28\textrm{e}-04$&$ 2.10\textrm{e}-04$& $ 7.07\textrm{e}-05$ & $2.39\textrm{e}-05$&$8.10 \textrm{e}-06$&$ 2.76\textrm{e}-06$  & 1.551\\
							$ $& $ H^{\frac{s}{2}}$-norm & $1.32\textrm{e}-01$ & $8.48\textrm{e}-02 $&$ 5.48\textrm{e}-02$& $3.55\textrm{e}-02$ & $2.30\textrm{e}-02 $&$1.49\textrm{e}-02$&$ 9.70\textrm{e}-03$  & 0.620 \\\hline
						\end{tabular}
					\end{footnotesize}
				\end{center}
			\end{table}
			%----------------------------------------------------------------
		\end{example}
		%-------------------------------------------------------
		\begin{example}\label{exl}
			As the final example, we deal with the linear Caputo fractional differential equation ($g(x,u(x))=0$) by considering $ f(x)=x^\theta$ { to belong} $ H^{\alpha}(\Omega)$ for $ \alpha\in \left[ 0,\theta+\frac{1}{2}\right)$ and $ \theta\in \{\frac{-1}{3},\frac{-1}{4},\frac{-1}{5} \}.$ The exact solution for different { values of} $\theta$ is $u(x)=c_\theta (x^{s-1}-x^{s+\theta})$ with $ c_\theta=\frac{\Gamma (\theta +1)}{\Gamma (s+\theta +1)}.$
			
			Table   \ref{tabl} displays the theoretical and numerical rates of convergence in $H^{\frac{s}{2}}$-norm  for different $ s=\frac{7}{4},\frac{3}{2},\frac{4}{3}$. 
			This  problem { is satisfied} the Remark \ref{reml}. For different { values of} $\alpha$ and $s$, the value of $\gamma$ is given by $ \min\{\alpha+s,2\}$. 
			For instance, for $ s=\frac{7}{4} $ and $ \theta=\frac{-1}{5},$ then $ \gamma=2$ and the rate of convergence is $ O(h^{2-\frac{s}{2}})=O(h^{1.125}). $
		\end{example}
		%-------------------------------------------------------
		\begin{table}[!ht]
			\begin{center}
				\begin{small}
					\caption{A comparison between theoretical and numerical convergence rates in  $ H^{\frac{s}{2}}$-norm  for Example \ref{exl} with  the Caputo fractional derivative.}\label{tabl}\vspace*{0.1in}
					\begin{tabular}{|l|c|c|c|}
						\hline
						\diagbox[width=1.5cm, height=0.5cm]{{\large $ s $}}{{\large$ ~\theta $} }&$ 7/4 $ & $3/2 $ & $ 4/3 $ \\ \hline
						$ -1/3 $ & $\dfrac{}{} 1.059~(1.042) $ & $ 0.916~(0.917) $& $ ---- $ \\ \hline
						$ -1/4 $ & $\dfrac{}{} 1.103~(1.125) $ & $ 0.974~(1.000) $ & $ 0.925~(0.917) $\\ \hline
						$-1/5 $ & $ \dfrac{}{}1.124~(1.125) $ & $1.000 ~(1.050) $& $ 0.970~(0.967) $ \\ \hline
					\end{tabular} 
				\end{small}
			\end{center}
		\end{table}
		\section*{Conclusion and future studies} In this paper, we have studied the Lagrange finite element method for a class of semi-linear FDEs of { the}  Riemann-Liouville and { the} Caputo types. To this aim, a weak formulation of the problems { has} been introduced in the suitable function spaces constructed by considering the fractional Sobolev and Musielak-Orlicz { spaces} duo to the presence of the nonlinear term. In addition, the existence and uniqueness issue of the weak solution together with { its} regularity is discussed. The weak formulation is discretized by Galerkin method with piecewise linear polynomials basis functions. Finding  an error bound in $H^{\frac{s}{2}}$-norm is considered for the Riemann-Liouville and Caputo fractional  differential equations. Different examples with the varieties of the nonlinear terms have been examined and the absolute errors are reported in $L^2$ and $H^{\frac{s}{2}}$-norms.  
		
		The nature of the nonlinearity and also { the} fractional essence of the problem cause low order convergence of the method. 
		In order to improve the approach for this class of FDEs, one can {  apply} the idea of splitting method, where the solution is separated into regular and singular parts;  { this} is { possible} by utilizing the Taylor expansion of the nonlinear operator and the finite element method { accompanied by} a quasi-uniform mesh. 
		Also, as discussed in the numerical experiments section, the integrals in the obtained nonlinear system  are discretized by a suitable quadrature method. { Surveying} the effect of quadrature method in finite element approximation and a priori error estimation is an idea for { the} future studies. As reported in the numerical section, we have observed the absolute errors in $L^2$-norm which are sharper than the errors in $H^\frac{s}{2}$-norm. An interesting question for { a} further study is how to obtain an appropriate theoretical error bound in $L^2$-norm.

\begin{acknowledgements}
We gratefully thank Bangti Jin (University College London) for helpful discussion. 
\end{acknowledgements}

%\begin{acknowledgements}
%If you'd like to thank anyone, place your comments here
%and remove the percent signs.
%\end{acknowledgements}

% BibTeX users please use one of
%\bibliographystyle{spbasic}      % basic style, author-year citations

%\bibliographystyle{ksfh_nat}     
\bibliography{manabe} 
% Non-BibTeX users please use
%\begin{thebibliography}{}
%
% and use \bibitem to create references. Consult the Instructions
% for authors for reference list style.
%
%\bibitem{RefJ}
% Format for Journal Reference
%Author, Article title, Journal, Volume, page numbers (year)
% Format for books
%\bibitem{RefB}
%Author, Book title, page numbers. Publisher, place (year)
% etc
%\end{thebibliography}

\end{document}